\newcommand{\thickhline}{%
    \noalign {\ifnum 0=`}\fi \hrule height 1pt
    \futurelet \reserved@a \@xhline
}
\newcolumntype{"}{@{\hskip\tabcolsep\vrule width 1pt\hskip\tabcolsep}}
\renewcommand{\arraystretch}{0.9}
\newcommand*\patchAmsMathEnvironmentForLineno[1]{%
\expandafter\let\csname old#1\expandafter\endcsname\csname #1\endcsname  \expandafter\let\csname oldend#1\expandafter\endcsname\csname end#1\endcsname  \renewenvironment{#1}%
{\linenomath\csname old#1\endcsname}%
{\csname oldend#1\endcsname\endlinenomath}}%
\newcommand*\patchBothAmsMathEnvironmentsForLineno[1]{%
\patchAmsMathEnvironmentForLineno{#1}%
\patchAmsMathEnvironmentForLineno{#1*}}%
\newtheorem{lem}{Lemma}[section]
\newtheorem{thm}[lem]{Theorem}
\newtheorem{cor}[lem]{Corollary}
\newtheorem{pro}[lem]{Proposition}
\title{Independent removable edges in cubic bricks\footnotetext{This work is supported by NSFC (Nos. 12271235, 12361070) and
 NSF of Fujian (Nos. 2021J01978, 2021J06029).} }
\author{
 Fuliang Lu$^{1}$ \ \ and Jianguo Qian$^{2}$\thanks{Corresponding author.\newline E-mail addresses: flianglu@163.com (F. Lu), jgqian@xmu.edu.cn (J. Qian).}
 \unskip\\[.5em]
{\small 1. School of Mathematics and Statistics,
Minnan Normal University, Zhangzhou, PR China}\\
{\small 2. School of Mathematics and Statistics,
Qinghai Minzu University, Xining, PR China}
}
\date{}
\def\r{{removable edge}}
\def\e{{essentially 4-edge-connected}}
\begin{document}

\newcommand{\udots}{\mathinner{\mskip1mu\raise1pt\vbox{\kern7pt\hbox{.}}
\mskip2mu\raise4pt\hbox{.}\mskip2mu\raise7pt\hbox{.}\mskip1mu}}
\maketitle
\begin{abstract}
{ An edge $e$ in a matching covered graph $G$ is {\em removable} if $G-e$ is matching covered, which was introduced by Lov\'asz and Plummer  in connection with ear decompositions of matching covered graphs. A {\it brick}} is a non-bipartite matching covered graph without   non-trivial tight
  cuts.~The importance of bricks stems from the fact   that they are building blocks of matching   covered graphs. Improving  Lov\'asz's result,  Carvalho et al. [Ear decompositions of matching covered graphs, {\em Combinatorica}, 19(2):151-174, 1999] showed that each brick other than $K_4$ and $\overline{C_6}$ has $\Delta-2$ removable edges, where $\Delta$ is the maximum degree of  $G$. In this paper,  we show that every cubic brick $G$ other than $K_4$ and $\overline{C_6}$ has a matching of size at least $|V(G)|/8$, each edge of which is removable in $G$.
\\
\par {\small {\it Keywords: }\  {cubic graph; matching covered graph; perfect matching; removable edge}}
\end{abstract}
\vskip 0.2in \baselineskip 0.1in
\section{Introduction}

We consider only undirected simple graphs.   A connected graph $G$ is  {\it matching covered}, {also referred to   as {\it 1-extendable},} if each edge lies in some
perfect matching of $G$.  For the terminologies related to matching covered graphs,
we follow Lov\'asz and Plummer~\cite{LP86}.

For a graph $G$, we denote by $V(G)$ and $E(G)$ the vertex set and edge set of $G$, respectively.
For two disjoint non-empty vertex subsets $X,Y\subseteq V(G)$, we denote by $G[X]$ the
subgraph of $G$ induced by $X$, and by $E_G(X,Y)$ the set of the edges joining one vertex in $X$ and one in
$Y$. In particular, we call $E_G(X,\overline{X})$ an {\it edge cut} of $G$ and denote by {$\partial_G(X)$, or simply by $\partial (X)$,} where $\overline{X}=V(G)\setminus X$. An edge cut $\partial (X)$ is
{\it trivial} if either $|X|=1$ or $|\overline{X}| =1$.
For a non-trivial edge cut $\partial (X)$, we denote the graph obtained from $G$ by contracting  $X$ to a single vertex $x$ by
 $G/X\rightarrow x$, or simply  $G/X$. Further, we call $G/X$ the $\partial (X)$-{\it contraction} of $G$ and $x$ the {\it contracted vertex}.
{An edge cut
$\partial (X)$ of $G$ is  {\it tight }
if $|\partial (X)\cap M|=1$ for every perfect matching $M$ of $G$ and is  {\it separating} if, for any $e\in E(G)$, $G$ has a perfect matching $M_e$ such that $e\in M_e$ and $|\partial (X)\cap M_e|=1$. Obviously, if $G$ is matching covered then every tight cut $\partial (X)$ is separating
and, hence both $G/X$ and  $G/\overline X$ are matching covered.}
 We call a
matching covered graph $G$ that contains no non-trivial tight cuts
a {\it brick} if $G$ is non-bipartite, and a {\it brace}
otherwise. {Edmonds et al.~\cite{ELP8274} } showed that a graph $G$ is a {\it brick} if and only if $G$ is
$3$-connected and $G-\{x,y\}$ has a perfect matching for any two
distinct vertices $x, y\in V(G)$ (bicritical). Further, Lov\'asz~\cite{L87} proved
that any matching covered graph can be decomposed
into a unique list of bricks and braces by a
procedure called the tight cut decomposition.

An edge $e$ of a matching covered graph  $G$ is
{\em removable} if $G-e$ is matching covered.  For $\{e,f\}\subseteq E(G)$, we say that $\{e,f\}$ is a
{\em removable doubleton} of $G$ if neither $e$ nor $f$ is removable and  $G-\{e,f\}$ is matching covered. Removable edge and removable doubleton are generally called {\em removable class}.  The notion of removable class was introduced by Lov\'asz and Plummer, which arises in connection with ear decompositions of
matching covered graphs.

  Lov\'asz \cite{lo} proved  that every brick distinct from $K_4$ and
{$\overline{C_6}$ (the triangular prism) has} a removable edge. Further, Carvalho et al. \cite{CLM9914} showed that each brick $G$ other than $K_4$ and $\overline{C_6}$ has { at least $\Delta-2$ removable edges and the lower bound} is attained by the cubic brick $R_8$ as shown Figure \ref{fig:1}(a), where $\Delta$ is the maximum degree of  $G$. In \cite{zlg} Zhai et al. proved that   the number of removable ears in
 every matching covered graph $G$ is not less than
 the minimum number of the perfect matchings needed to cover all edges of $G$.
Carvalho and  Little  \cite{CL14} showed that every matching covered graph, except the cycle,  has at least three removable classes.

In this paper we consider the number of pair-wise non-adjacent removable edges in  cubic  bricks in terms of the number of vertices.

\begin{thm}\label{thm:main}
Let  $G$ be a  cubic brick other than $K_4$ and $\overline{C_6}$. Then  $G$ has a
matching of size at least $|V(G)|/8$, each edge of which is removable in $G$.

\end{thm}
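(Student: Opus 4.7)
My plan is to combine a linear lower bound on the number of removable edges with a matching-extraction argument, and to handle small cases via Proposition~\ref{thm:3re}.

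Let $R=R(G)$ denote the set of removable edges of the cubic brick $G$. The theorem of Carvalho, Lucchesi and Murty cited in the excerpt gives only $\Delta-2=1$ removable edge, so the main task is to replace this bound by one linear in $|V(G)|$; my target is $|R|\ge 5|V(G)|/7$. For a non-removable edge $e=uv$ the graph $G-e$ is connected (since $G$ is $3$-connected), and non-removability is equivalent to the existence of a non-trivial tight cut in $G-e$. Using bicriticality and $3$-connectedness of $G$, I would classify such tight cuts and show that each non-removable edge forces a local structural obstruction involving only a bounded number of vertices, so that non-removable edges cannot cluster too densely. Once $|R|\ge 5|V(G)|/7$ is in hand, the cubic degree constraint (each edge is adjacent to at most four other edges) yields a greedy matching in $R$ of size at least $|R|/5\ge |V(G)|/7$.

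Because the constant $1/7$ is claimed to be sharp, a purely greedy extraction after a generic density bound is likely too lossy, and I expect the density bound and the matching to be constructed jointly. The natural realisation is a regional argument: partition (or fractionally cover) $V(G)$ into regions of at most $7$ vertices each, so that every region either contributes a selected removable matching edge or is charged to the tight-cut obstruction that forbids one. The size-$7$ building block should correspond to the extremal family achieving sharpness, presumably built from $R_8$-like pieces glued along small tight cuts, and recognising this block should pin down the correct structural lemma.

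Small exceptional bricks are then handled directly via Proposition~\ref{thm:3re}: apart from $K_4$, $\overline{C_6}$, and $R_8$ themselves there are only six further cubic bricks with exactly three removable classes, each of order at most $16$, and for these the required matching has size at most $\lfloor 16/7\rfloor=2$, which can be verified case by case. For every other cubic brick one can assume a supply of removable edges rich enough for the density/extraction argument above.

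\textbf{Main obstacle.} The crux of the proof will be the density estimate. Two nearby non-removable edges $e,e'$ can have tight-cut witnesses in $G-e$ and $G-e'$ that cross in intricate ways, so naive vertex charging double-counts. Controlling these crossings so that the accounting yields exactly the constant $1/7$---and matches the extremal family---is what I expect to be the hardest step.
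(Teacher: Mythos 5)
Your proposal diverges sharply from the paper's proof and, more seriously, its first key step is false. You propose to establish $|R(G)| \ge 5|V(G)|/7$ and then extract a matching greedily. But the extremal graph in Figure~\ref{fig:1}(b), which realises the bound $|V(G)|/7$, is one of the nine cubic bricks of Proposition~\ref{thm:3re} with \emph{exactly three removable classes}. Since a removable class is either a single removable edge or a removable doubleton (neither edge of which is removable), such a brick has at most three removable edges. With $|V(G)|=14$ this is far below the ten removable edges your density bound would demand. In fact the whole reason the paper must work directly with matchings of removable edges rather than passing through a count of removable edges is that removable edges in cubic bricks can be quite sparse: the sharp object here is a matching, not a large set from which one greedily extracts. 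The fallback ``regional'' argument you sketch is not developed enough to evaluate; you acknowledge that controlling crossing tight-cut witnesses is the hard part, and that is precisely where the proof must actually live.

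For comparison, the paper's route is structural and inductive along non-trivial $3$-cuts. It introduces the family $\mathscr{G}$ of graphs obtained from $K_4$ by repeated triangle-insertions, the $\Delta$-replacement operation (contracting one shore of a $3$-cut and reinserting a triangle, Propositions~\ref{pro:e}--\ref{pro:e2}), and the splicing calculus (Propositions~\ref{pro:spl1}--\ref{pro:spl}) to transfer removability between a graph and its $3$-cut contractions. For essentially $4$-edge-connected cubic bricks it has Theorem~\ref{thm:4ecr} and Corollary~\ref{cor:4ecr} providing a \emph{perfect} matching of removable edges, which is the base of the induction. Lemmas~\ref{lem:nnb1} and~\ref{lem:nnb2} then produce, inside a shore $Y$ of some non-trivial $3$-cut, a matching of removable edges of size $\ge (|Y|-3)/7+1$, and Lemma~\ref{lem:1} glues such local matchings across $3$-cuts by induction on $|V(G)|$ using $\Delta$-replacements. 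None of this resembles a density bound plus greedy extraction, and the extremal analysis is carried out through explicit small configurations ($G_1^*,\ldots,G_4^*$, the graphs of Figure~\ref{fig:263}) rather than a size-$7$ region partition. To repair your proposal you would have to abandon the global density lemma and instead prove a local statement of the form ``inside each shore of a suitable $3$-cut one can place a removable matching of the right size,'' together with a gluing lemma — which is essentially what the paper does.
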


\begin{figure}[htbp]
 \centering
 \includegraphics[height=2.5cm]{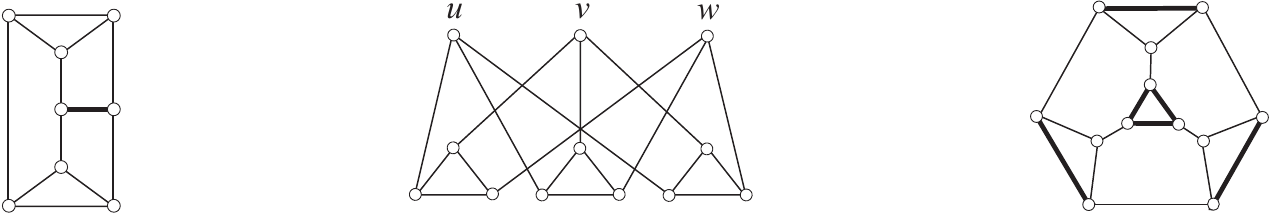}\\
 (a)~~~~~~~~~~~~~~~~~~~~~~~~~~~~~~~~~~~~~~~~~~(b)~~~~~~~~~~~~~~~~~~~~~~~~~~~~~~~~~~~~~~~~~~~~~~~~~~(c)~~~~
\caption{The bold edges represent the removable edges. }
\end{figure}\label{fig:1}

We note that the size $|V(G)|/8$ in Theorem \ref{thm:main} is attained by the graph $R_8$. Further, though Theorem \ref{thm:main}  gives the least numbers of  removable edges for  cubic bricks, this is not the case in general cubic matching covered graphs. For example, the graph shown in Figure \ref{fig:1}(b) is a  3-connected cubic matching covered graph, which  is not  bicritical (the removal of any two vertices in $\{u,v,w\}$,  the left graph does not have a perfect matching) and {hence}, contains no removable edges at all.

{In the following section, we present some basic properties concerning removable edges. In Section 3, we give a proof of Theorem \ref{thm:main}.

\section{ Preliminaries }
In this section, we { recall some known results and present some basic properties concerning removable edges  that will be used in our proof of the main result.}

\subsection{Removable classes }
\label{sec:pr}

Let $G$ be a connected graph with a perfect matching. A nonempty subset $X$ of $V (G)$ is a {\em barrier} if  $o(G-X) = |X|$, where $o(G-X)$ denotes the number of odd components of $G-X$. It follows from the well-known Tutte's Perfect Matching Theorem that  if  $uv\in E(G)$ and $G$ has a barrier that contains both  $u$ and $v$, then no perfect matchings of $G$ contain  the edge $uv$. The following result is also a consequence of Tutte's Theorem.

\begin{lem} {\em  \cite{P9130}} \label{lem:2e}
Every 2-edge-connected cubic graph is matching covered.
\end{lem}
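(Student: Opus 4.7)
The plan is to verify, for an arbitrary edge $e=uv$ of $G$, that $e$ lies in some perfect matching of $G$; equivalently, that the graph $G-u-v$ has a perfect matching. This will be established through Tutte's theorem together with a parity/edge-counting argument exploiting the 2-edge-connectivity.

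First I would reformulate Tutte's condition for $G-u-v$. A perfect matching of $G-u-v$ exists iff $o\bigl((G-u-v)-S\bigr)\le|S|$ for every $S\subseteq V(G)\setminus\{u,v\}$. Setting $T=S\cup\{u,v\}$, this becomes the requirement $o(G-T)\le|T|-2$ for every $T\supseteq\{u,v\}$. Since $G$ is cubic, $|V(G)|$ is even, and the usual parity count $|V(G-T)|\equiv o(G-T)\pmod 2$ gives $o(G-T)\equiv|T|\pmod 2$; hence it actually suffices to prove the weaker bound $o(G-T)\le|T|-1$ for every such $T$.

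Next I would bound the edge cut $\partial(T)$ in two ways. On one hand, because $uv\in E(G[T])$ we have at least one edge of $G$ with both ends in $T$, so
\[
|\partial(T)|=3|T|-2|E(G[T])|\le 3|T|-2.
\]
On the other hand, for each odd component $C$ of $G-T$, the set $\partial_G(C)$ is contained in $\partial(T)$ (edges leaving $C$ can only enter $T$), has odd cardinality (since $\sum_{x\in V(C)}\deg_G(x)=3|V(C)|$ is odd), and has cardinality at least $2$ by the 2-edge-connectivity of $G$; hence $|\partial_G(C)|\ge 3$. Summing over odd components yields
\[
3\,o(G-T)\le\sum_{C\text{ odd}}|\partial_G(C)|\le|\partial(T)|\le 3|T|-2,
\]
so $o(G-T)\le|T|-\tfrac23$, i.e. $o(G-T)\le|T|-1$. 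The parity observation then upgrades this to $o(G-T)\le|T|-2$, which is Tutte's condition for $G-u-v$.

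Finally I would conclude: $G-u-v$ has a perfect matching $M'$, so $M'\cup\{uv\}$ is a perfect matching of $G$ containing $e$; as $e$ was arbitrary, every edge of $G$ lies in a perfect matching and $G$ is matching covered. The main obstacle is the combined use of the two ingredients that makes the inequality strict enough: without the edge $uv$ sitting inside $T$ (contributing the saving of $2$ in the upper bound on $|\partial(T)|$) one only recovers $o(G-T)\le|T|$, which is not strong enough to apply Tutte; without the 2-edge-connectivity one could not replace the trivial lower bound $1$ on $|\partial_G(C)|$ by $3$. Both estimates must be squeezed simultaneously, and this is the only delicate point of the argument.
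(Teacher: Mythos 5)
Your proof is correct, and it is the standard Tutte-theorem argument that the paper alludes to (the paper states the lemma ``is also a consequence of Tutte's Theorem'' but simply cites \cite{P9130} rather than spelling out a proof). The parity upgrade from $o(G-T)\le|T|-1$ to $o(G-T)\le|T|-2$, the observation that each odd component of $G-T$ sends an odd number $\ge 2$ (hence $\ge 3$) of edges into $T$, and the saving of $2$ in $|\partial(T)|$ coming from $uv\in E(G[T])$ are exactly the ingredients needed, and you combine them correctly.
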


 An edge cut with $k$ edges is called a {\it $k$-cut}. The following proposition follows directly from Lemma \ref{lem:2e}.

\begin{pro}{\em \cite{CLM051}}\label{pro:3cut} Every 3-cut of a  2-edge-connected cubic graph is a separating cut.
\end{pro}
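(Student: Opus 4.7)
The plan is to derive this directly from Lemma~\ref{lem:2e} by exploiting the two contractions on either side of the 3-cut. Let $\partial(X)$ be a 3-cut of the 2-edge-connected cubic graph $G$, and set $G_1 := G/\overline{X}$ with contracted vertex $\bar x$, and $G_2 := G/X$ with contracted vertex $x$. Since $G$ is cubic and $|\partial(X)|=3$, the contracted vertex in each $G_i$ has degree exactly $3$, so both $G_1$ and $G_2$ are cubic (though not necessarily simple). Given an arbitrary edge $e \in E(G)$, I want to produce a perfect matching $M_e$ of $G$ containing $e$ with $|M_e \cap \partial(X)| = 1$.

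The first step is to verify that $G_1$ and $G_2$ are still 2-edge-connected. I would do this by a direct quotient-map argument: if $e$ is an edge of $G_1$ with preimage $e' \in E(G[X]) \cup \partial(X)$, any path in $G - e'$ projects to a walk in $G_1 - e$ (with all of $\overline X$ collapsing onto $\bar x$), so connectivity of $G - e'$ forces connectivity of $G_1 - e$. Hence $G$ having no bridge passes to $G_1$, and symmetrically to $G_2$. By Lemma~\ref{lem:2e}, both $G_1$ and $G_2$ are matching covered.

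With this in hand, I split into cases by the location of $e$. When $e \in E(G[X])$, choose a perfect matching $M_1$ of $G_1$ containing $e$; since $M_1$ saturates $\bar x$, the unique edge $f$ of $M_1$ incident with $\bar x$ lies in $\partial(X)$. Next, choose a perfect matching $M_2$ of $G_2$ containing $f$. The union $M_1 \cup M_2$, viewed as edges of $G$, shares only the edge $f$, saturates every vertex of $G$, and meets $\partial(X)$ exactly in $\{f\}$; take $M_e$ to be this union. The case $e \in E(G[\overline X])$ is symmetric, and the case $e \in \partial(X)$ is even easier by taking $f := e$ from the outset and picking $M_1, M_2$ containing $e$ in $G_1, G_2$ respectively.

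The main point requiring care is the bridge-lifting step for the contractions, since $G_1$ and $G_2$ may harbor multiple edges; once that is settled, the rest is a clean gluing of two perfect matchings along a common edge of the 3-cut. A parity check (that $|X|$ is odd in a cubic graph with a 3-cut, so $|M \cap \partial(X)|$ is odd for every perfect matching $M$) is also worth recording, as it confirms that ruling out $|M_e \cap \partial(X)| = 3$ is the only obstacle we need to overcome.
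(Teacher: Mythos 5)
Your argument is correct and is exactly the derivation the paper has in mind when it says Proposition~\ref{pro:3cut} ``follows directly from Lemma~\ref{lem:2e}'': contract both sides of the $3$-cut, observe that each contraction is a $2$-edge-connected cubic graph, invoke Lemma~\ref{lem:2e} on each, and glue two perfect matchings across the unique edge of the cut used by each side. The bridge-lifting step and the case split on the location of $e$ are handled cleanly, and your parity remark ($|X|$ odd, hence $|M\cap\partial(X)|\in\{1,3\}$) correctly isolates what must be shown.

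One small point worth making explicit: since $G$ is only assumed $2$-edge-connected, the contractions $G/X$ and $G/\overline X$ may indeed have parallel edges (for instance when two edges of $\partial(X)$ share an endpoint), so you are implicitly applying Lemma~\ref{lem:2e} to cubic \emph{multigraphs}. You flag that the contractions are ``not necessarily simple'' but then apply the lemma without comment; a sentence noting that the underlying result (Sch\"onberger/Petersen: every bridgeless cubic graph, multigraph or not, is matching covered) is insensitive to parallel edges would close this. This is a cosmetic issue, not a gap in the logic.
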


For a matching covered bipartite graph, we have
the following theorem.

\begin{thm}{\em(Theorem 4.1.1 in  \cite{LP86})} \label{thm:bi}
 Let  $G$ be a matching covered bipartite graph with color classes $A$ and $B$. Then $G-\{u,v\}$ has a perfect matching for any $u\in A, v\in B$.
\end{thm}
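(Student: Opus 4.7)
The plan is a short contradiction argument combining Hall's marriage theorem with the matching-covered hypothesis and the elementary fact that matching covered graphs are connected. Suppose, for contradiction, that $G-\{u,v\}$ has no perfect matching. Because $G$ itself has a perfect matching we have $|A|=|B|$, so $G-\{u,v\}$ is balanced and Hall's condition must fail on the side $A\setminus\{u\}$: there exists $S\subseteq A\setminus\{u\}$ with $|N_G(S)\setminus\{v\}|<|S|$.

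Next I would feed this violation back into $G$. Hall applied to $G$ itself gives $|N_G(S)|\ge|S|$, and together with the above inequality this pins down the situation completely: $|N_G(S)|=|S|$ and $v\in N_G(S)$. Write $T:=N_G(S)$, so $|S|=|T|$, $v\in T$, and $u\notin S$.

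The crux is to upgrade the Hall-tight pair $(S,T)$ to a disconnecting cut of $G$ using matching-coveredness. By definition of $T$, no edge of $G$ joins $S$ to $B\setminus T$. Conversely, any edge $e=ab$ from $A\setminus S$ to $T$ would, by the matching-covered assumption, lie in some perfect matching $M$ of $G$; but then $M$ would have to match the $|S|$ vertices of $S$ into the set $T\setminus\{b\}$ of size $|S|-1$, which is impossible. Hence $E_G(A\setminus S,T)=\emptyset$ as well, so no edge of $G$ crosses between $S\cup T$ and $(A\setminus S)\cup(B\setminus T)$. Since $u\in A\setminus S$ and $|T|=|S|\le|A|-1<|B|$, both of these subsets are non-empty, contradicting the connectedness of the matching covered graph $G$.

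There is no real obstacle here; the one point of care is to check that both sides of the resulting separation are non-empty, which is immediate from $u\notin S$ and the strict inequality $|T|\le|A|-1<|B|$. One could alternatively derive the statement by applying the Dulmage--Mendelsohn decomposition of Theorem~\ref{thm:dm} to a hypothetical non-extendable edge in $G$, but the two routes are essentially equivalent and the direct Hall-based argument above is more transparent and self-contained.
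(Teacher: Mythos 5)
Your proof is correct. A small bookkeeping remark: the paper does not prove this statement itself but simply cites it as Theorem 4.1.1 of Lov\'asz--Plummer, so there is no in-paper argument to compare against; the evaluation is on correctness alone. Your Hall-based contradiction argument is sound: Hall's condition on $G-\{u,v\}$ fails on some $S\subseteq A\setminus\{u\}$, which is necessarily non-empty; combining with Hall on $G$ forces $|N_G(S)|=|S|$ and $v\in N_G(S)$; setting $T=N_G(S)$, the matching-covered hypothesis then kills any edge from $A\setminus S$ to $T$ (such an edge $ab$ in a perfect matching $M$ would force $S$ to be matched into $T\setminus\{b\}$, a set that is too small), and by definition of $T$ there is no edge from $S$ to $B\setminus T$; both sides of the resulting partition are non-empty since $u\in A\setminus S$ and $S\neq\emptyset$, contradicting connectedness. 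This is a transparent, self-contained route; it is also, as you note, essentially a reformulation of the Dulmage--Mendelsohn argument (Theorem~\ref{thm:dm}) specialized to this situation, and either route is acceptable here.
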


For two edges $e$ and $f$ in $G$, we say $e$ {\it depends on} $f$ if every perfect
matching of $G$ that contains $e$ also contains $f$. Equivalently, $e$ depends on $f$ if $e$ does not lie in any perfect matching in
$G-f$. We note that an edge $e$ in a matching covered graph $G$ is removable if and only if $e$ is not depended by any other edge of $G$.
We {call an edge cut of a graph $G$ {\em good} if it is separating but not tight, and call a vertex {\em covered by a matching} $M$ if it is incident with some edge in $M$. We note that if $G$ has  a good edge cut, then $G$ has a perfect matching that contains at least three edges in this cut.} Then every separating cut in a brick is good.

\begin{lem}\label{pro:easy}
Let $C=\partial (X)$ be a  good 3-cut of a matching covered graph $G$, $e_0\in E( G[{X}])$, and
$e_1$ an edge in  $C$  that is not depended by any edge in   $E( G[\overline{X}])$ (if such $e_1$ exists).
 Assume that, for any $e\in  C\cup E(G[{X}])\setminus\{e_0\}$,  {$G-e_0$ has  a matching $M$ that contains $e$ and covers every vertex in $X$  and, in particular, $M\cap C=\{e\}$ if $e\in  C\setminus \{e_1\}$.
Then $e_0$ is removable in $G$.}
\end{lem}
\begin{proof}{It suffices to show that, for any $e\in G-e_0$,  $G-e_0$ has a perfect matching containing $e$.   Assume firstly that $e\in E(G[\overline{X}])\cup C\setminus \{e_1\}$. Since $G/X$ is matching covered, it is clear that  $e_1$ is removable in $G/X$. Therefore, $G/X$ has a perfect matching $M_1$ such that $e\in M_1$ and
  $|(C\setminus \{e_1\})\cap M_1|=1$.}
 By hypothesis,  $G-e_0$ has a matching $M'$  such that $M'$ covers every vertex in $X$ and
   $M_1\cap C= M'\cap C$ (noticing $|M_1\cap C|=1$).
  Then  $ (M_1 \cap E(G[\overline{X}]))\cup (M'\cap(E(G[{X}])\cup C))$ is a perfect matching of $G-e_0$  containing $e$.

  {Assume secondly that $e\in E(G[{X}])\cup \{e_1\}\setminus\{e_0\}$. Let $M''$ be a  matching  of  $G-e_0$  that contains $e$ and covers  every vertex in $X$.} Then $|M''\cap C|=1~\mbox{or} ~ 3$.
So
$( M_2\cap E(G[\overline{X}])) \cup (M''\cap(E(G[{X}])\cup C) )$ is a perfect matching of $G-e_0$  containing $e$, where $M_2$ is  a  perfect matching  of $G$  satisfying $M_2\cap C=M''\cap C$ (such $M_2$ exists since  $C$ is a good 3-cut).  So the result follows.
\end{proof}

{As examples for Lemma \ref{pro:easy}, we can verify that all bold edges in Figures \ref{fig:2}-\ref{fig:265} satisfy the condition of the proposition and, hence removable (the graph in each figure is the subgraph induced by $G[X]$, and we assume that every edge in   $\partial (X)$ is  depended by some edge in   $E( G[\overline{X}])$, where $\partial (X)$ is a good 3-cut of $G$). On the other hand,  let's consider  the graph $G$ as shown in Figure \ref{fig:1}(c). We can see that every edge in $G[{X}]$ is removable but does not admit the condition of  Lemma \ref{pro:easy}, where $X$ is the set of the three vertices in the central triangle of $G$.}

The following corollary follows  directly by Lemma \ref{pro:easy}.
\begin{cor}\label{cor:ands}
Let $u_1u_2u_3$ be a triangle of a  cubic brick $G$ and  $u_1v_1\in E(G)$.   If $uv_1$ is removable in $G/\{u_1,u_2,u_3\}\rightarrow u$ and $\partial(\{u_1,u_2,u_3\})$ is good, then $u_2u_3$ is removable in $G$.

\end{cor}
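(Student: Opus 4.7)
The plan is to verify that Proposition \ref{pro:easy} applies with $X=\{u_1,u_2,u_3\}$, $e_0=u_2u_3$, and $e_1=u_1v_1$. Writing $v_2,v_3$ for the third neighbors of $u_2,u_3$ outside the triangle, we have $C=\partial(X)=\{u_1v_1,u_2v_2,u_3v_3\}$. First I would check that $C$ is a good 3-cut: since $G$ is a cubic brick it is 2-edge-connected, so Proposition \ref{pro:3cut} gives that $C$ is separating; the cut $C$ is non-trivial (the hypothesis that the contraction $G/X$ makes sense as a graph distinct from $G$ forces $G\ne K_4$, hence $|\overline X|\ge 2$), and because $G$ is a brick no non-trivial cut is tight, so $C$ is good. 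A short 3-edge-connectivity argument applied to the triangle also rules out any coincidence among $v_1,v_2,v_3$, ensuring that $\{u_1v_1,u_2v_2,u_3v_3\}$ is a genuine matching of $G$.

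Next I would show that $e_1=u_1v_1$ is not depended by any edge of $E(G[\overline{X}])$, using the hypothesis. Fix $e\in E(G[\overline X])$; since $uv_1$ is removable in $G/X$, the graph $(G/X)-uv_1$ is matching covered and has a perfect matching $\widetilde M$ containing $e$. The contracted vertex $u$ must be saturated by $\widetilde M$, so $\widetilde M$ contains exactly one of $uv_2,uv_3$; say $uv_2$. Lifting $\widetilde M$ back to $G$ gives a matching $(\widetilde M\setminus\{uv_2\})\cup\{u_2v_2\}$ that saturates $\overline X$ and $u_2$, and adjoining the triangle edge $u_1u_3$ produces a perfect matching of $G$ that contains $e$ but avoids $u_1v_1$, as required.

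The remaining hypothesis of Proposition \ref{pro:easy} is a direct check: for each $e\in(C\cup E(G[X]))\setminus\{u_2u_3\}$ I would display a matching of $G-u_2u_3$ that covers $X$ and (if $e\in C\setminus\{e_1\}$) meets $C$ only in $e$; explicitly, take $\{u_1u_2,u_3v_3\}$ for $e=u_1u_2$, $\{u_1u_3,u_2v_2\}$ for $e=u_1u_3$, $\{u_1v_1,u_2v_2,u_3v_3\}$ for $e=u_1v_1$, $\{u_2v_2,u_1u_3\}$ for $e=u_2v_2$, and $\{u_3v_3,u_1u_2\}$ for $e=u_3v_3$. Proposition \ref{pro:easy} then yields that $u_2u_3$ is removable in $G$. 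I do not anticipate any genuine obstacle; the only substantive step is the lifting argument of the previous paragraph, where the triangle edge $u_1u_3$ is exactly what is needed to saturate the two triangle vertices left uncovered once $u_1v_1$ is forbidden.
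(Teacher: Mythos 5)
Your proposal is correct and takes exactly the route the paper intends: the paper states that this corollary "follows directly by Proposition \ref{pro:easy}," and you have spelled out the verification with $X=\{u_1,u_2,u_3\}$, $e_0=u_2u_3$, $e_1=u_1v_1$. The lifting argument for showing $u_1v_1$ is not depended on by any edge of $E(G[\overline X])$, and the explicit list of matchings of $G-u_2u_3$ covering $X$, are precisely the details the paper leaves implicit.
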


\subsection{Essentially 4-edge-connected cubic graphs}
\label{sec:pf}

A cubic graph is {\em essentially 4-edge-connected} if it is 2-edge-connected and  free of non-trivial 3-cuts.
Kothari et al. showed the following theorems.

\begin{thm}\label{thm:4ecb} {\em \cite{KCLL19a}} Every essentially 4-edge-connected cubic graph is either a brick or a brace.
\end{thm}
\begin{thm}\label{thm:4ecr}{\em \cite{KCLL19a}}
In an essentially 4-edge-connected cubic brick, each edge is either removable or  lies in a removable doubleton.
\end{thm}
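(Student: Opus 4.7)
The plan is as follows. To show that every edge $e\in E(G)$ is either removable or lies in a removable doubleton, I will suppose $e$ is non-removable and produce an edge $f$ such that $\{e,f\}$ is a removable doubleton. The endgame is clean: once I have $f$ \emph{mutually dependent} with $e$, Theorem~\ref{thm:lo} gives that $G-\{e,f\}$ is bipartite; since $G$ is essentially 4-edge-connected and cubic it is automatically 3-edge-connected (every 2-cut in a cubic graph is non-trivial, and essentially 4-edge-connectedness forbids non-trivial 3-cuts, hence 2-cuts as well); since $G$ is a brick it is non-bipartite; hence Proposition~\ref{brickto} applies and $G-\{e,f\}$ is matching covered. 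Mutual dependence also guarantees that neither $e$ nor $f$ is removable, so $\{e,f\}$ is indeed a removable doubleton. The entire problem therefore reduces to producing, from any non-removable $e$, a mate $f$ mutually dependent with $e$.

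A candidate $f$ is immediate: since $G-e$ is not matching covered, some edge $f\in E(G-e)$ lies in no perfect matching of $G-e$, which is exactly the statement that $f$ depends on $e$. The core of the proof is to upgrade this one-sided dependence to mutual dependence, possibly after re-choosing $f$. My first approach would be a barrier analysis in $G-e$: take a maximum barrier $B$ of $G-e$ containing both endpoints of $f$, and examine the bipartite structure formed by $B$ together with the odd components of $(G-e)-B$. Essential 4-edge-connectedness of $G$ sharply restricts the edge cuts between $B$ and these odd components, and the cubic degree condition forces tight balance between $|B|$ and the sizes of those components. From this restricted structure I would argue that every perfect matching of $G$ containing $e$ is forced to contain some specific edge $f'$ in the set of edges depending on $e$, giving mutual dependence of $e$ and~$f'$.

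A complementary angle is an iterative chain argument: starting from $e_0=e$, pick $e_1$ depending on $e_0$, then $e_2$ depending on $e_1$, and so on; in the finite brick $G$ this chain must revisit an earlier edge, and the resulting cyclic chain of dependences, combined with the restricted cut structure of $G$, should collapse to a genuine mutually dependent pair involving $e$. Either approach is likely to be organized as a contradiction, assuming $e$ lies in no removable doubleton and extracting, via the barrier structure, a 2-cut or non-trivial 3-cut of $G$, contradicting essential 4-edge-connectedness.

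The \textbf{main obstacle} is exactly this mutual dependence step. Dependence between edges is asymmetric in general, and promoting ``$f$ depends on $e$'' to ``$e$ and $f$ are mutually dependent'' cannot be done abstractly; the essentially 4-edge-connected cubic hypothesis is precisely what breaks the asymmetry, and exploiting it requires careful combinatorial analysis of the barriers in $G-e$ and the associated edge cuts in $G$. Once mutual dependence is established, the wrap-up via Theorem~\ref{thm:lo} and Proposition~\ref{brickto} is a short paragraph.
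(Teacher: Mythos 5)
The paper does not prove Theorem~\ref{thm:4ecr}: it is quoted from Kothari, Carvalho, Lucchesi and Little~\cite{KCLL19a}, so there is no in-paper argument to compare your proposal against. Evaluating the proposal on its own, the wrap-up you describe is correct: once you have an $f$ mutually dependent with $e$, mutual dependence itself gives that neither $e$ nor $f$ is removable; Theorem~\ref{thm:lo} then gives that $G-\{e,f\}$ is bipartite; and since a brick is $3$-connected (hence this cubic $G$ is $3$-edge-connected) and non-bipartite, Proposition~\ref{brickto} gives that $G-\{e,f\}$ is matching covered. So $\{e,f\}$ is a removable doubleton.

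The gap is precisely the step you flag. You correctly observe that $G-e$ not matching covered produces an $f$ that depends on $e$, but nothing you have written actually establishes the reverse dependence. Of your two suggested routes, the second (the chain $e_0,e_1,e_2,\ldots$ with each $e_{i+1}$ depending on $e_i$) does not work as stated: to continue the chain past $e_i$ you need $e_i$ to be non-removable, and ``$e_1$ depends on $e_0$'' carries no such information about $e_1$ --- dependence is asymmetric, so $e_1$ may well be removable, in which case the chain stops before ever revisiting an edge and finiteness yields nothing. The first route, by contrast, is the right idea and does close, but you have only announced it rather than carried it out. Concretely: pick a barrier $B$ of $G-e$ containing both ends of $f$; one checks that both ends of $e$ lie outside $B$, that $o\bigl((G-e)-B\bigr)=|B|$ together with the bicritical bound $o(G-B)\le|B|-2$ forces deletion of $e$ to split a single even component into two odd pieces; then counting $\sum_i |\partial(C_i)|=|\partial(B)|\le 3|B|-2$ against $|\partial(C_i)|\ge 3$ for each component $C_i$ of $G-B$, using the cubic parity of $|\partial(C_i)|$ and the absence of $1$-cuts, $2$-cuts and non-trivial $3$-cuts, pins down $E(G[B])=\{f\}$, pins all odd components to singletons, and pins the unique even component to the two ends of $e$. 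That makes $G-\{e,f\}$ bipartite with balanced color classes $B$ and $V(G)\setminus B$, which forces mutual dependence by a counting argument. None of this analysis appears in your proposal, so as submitted the proof has a genuine hole at its central step, even though the plan is salvageable.
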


For the removability of edges in a brace, we have the following.

\begin{thm}\label{thm:br}{\em \cite{CLM15}}
  each edge in a brace with at least 6 vertices is   removable.
\end{thm}

{For a bipartite graph $G(A,B)$, as usual we also call $A$ and $B$ the color classes of $G$. Further,  if $|A|=|B|$ then we call  $G(A,B)$ {\em balanced}.}
\begin{pro}\label{pro:4econ}
{Let $G$ be an  essentially 4-edge-connected cubic brick other than $K_4$, ${\mathscr{E}_0}$ the collection of all the removable doubletons of $G$, and  $E_0$ the set of the edges in the removable doubletons of ${\mathscr{E}_0}$.} Then  the following statements hold.\\
{\rm(i).} {\em{\cite{LK}}} If  $|{\mathscr{E}_0}|\geq 2$, then $G$ can be decomposed into balanced bipartite vertex-induced subgraphs $G_i$ $(i=1,2,\ldots,|{\mathscr{E}_0}|)$ satisfying
  $E_G(V(G_j),V(G_k))$   is a  removable doubleton of $G$ if $|j-k|\equiv 1\pmod {|{\mathscr{E}_0}|}$ {and $E_G(V(G_j),V(G_k))=\emptyset$ otherwise.}\\
{\rm(ii).} {$G$ has  a perfect matching $M$ such that $M\cap {E}_0=\emptyset$.}
\end{pro}

{\em   Proof of} (ii).
Let $s=|\mathscr{E}_0|$.
We first consider the case when $s=1$. Assume that $\{uv,xy\}$ is the only removable doubleton of $G$.
Since $G-\{uv,xy\}$ is matching covered, the result follows directly by choosing any perfect matching $M$ in $G-\{uv,xy\}$.

We now consider the case  when $s>1$. By (i), we may
assume that $\{x_1u_s,y_1v_s\}\cup(\cup_{i=1 }^{ s-1}\{u_iy_{i+1},v_ix_{i+1}\})$ are all the removable doubletons of $G$, where $\{u_i,y_{i},v_i,x_{i}\}\subset V(G_i)$, and  $u_i$ and $x_{i}$ lie in the same color class of $G_i$, other than the one containing $y_{i}$ and $v_i$ (note that $G_i$ is bipartite), for $i=1,2,\dots,s$. It should be noted that $x_i=u_i$ and $y_i=v_i$ if $|V(G_i)|=2$.

By Theorem \ref{thm:bi},
for any $i\in\{1,2,\ldots,s-1\}$, $G-\{u_iy_{i+1},v_ix_{i+1}\}-\{u_{i-1},v_{i-1}\}$ has a perfect matching (the subscript is taken modulo $s$);
$G-\{x_1u_s,y_1v_s\}-\{u_{s-1},v_{s-1}\}$ has a perfect matching, denoted it by $M_i$ ($ i=\{1,2,\ldots,s\} $). Then
$M_i\cap E(G_i)$ is a perfect matching of $G_i$.
 Therefore, the result follows by setting  $M= \bigcup_{i=1}^s(M_i\cap E(G_i))$.
$\hfill\square$

{The following is a direct consequence of Theorem \ref{thm:br} and Proposition \ref{pro:4econ} (ii).
\begin{cor}\label{cor:4ecr}
Let $G$ be an essentially 4-edge-connected cubic graph other than  $K_4$. Then $G$ has a perfect matching consisting of removable edges of $G$.
\end{cor}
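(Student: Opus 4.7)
The plan is to exploit the natural bipartite/non-bipartite dichotomy supplied by the result of Kothari et al.\ that every essentially 4-edge-connected cubic graph is either a brick or a brace, and handle each case with one of the tools already assembled in this section.

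If $G$ is a brace, then $G$ is bipartite and matching covered. I would first observe that the essential 4-edge-connectivity hypothesis actually forces $G$ to be 3-edge-connected: a cubic simple graph cannot admit a non-trivial 2-cut $\partial(X)$ with $|X|=2$ (by an elementary edge-count), while any non-trivial 2-cut with $|X|,|\overline{X}|\ge 4$ can be enlarged by absorbing one vertex of $\overline X$ adjacent to $X$ to produce a non-trivial 3-cut, contradicting the hypothesis. Once $G$ is known to be 3-edge-connected cubic bipartite, Corollary \ref{cor:bip} implies that every edge of $G$ is removable, so any perfect matching of $G$ (which exists because $G$ is matching covered) consists entirely of removable edges.

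If $G$ is a brick, I would invoke Proposition \ref{pro:4econ}(ii), which furnishes a perfect matching $M$ of $G$ disjoint from the set $E_0$ of all edges appearing in removable doubletons. Theorem \ref{thm:4ecr} guarantees that every edge of an essentially 4-edge-connected cubic brick is either removable or lies in a removable doubleton; since the edges of $M$ avoid $E_0$, they must all be removable. The degenerate sub-case $\mathscr{E}_0=\emptyset$ requires no separate argument, as Theorem \ref{thm:4ecr} then already makes every edge of $G$ removable, so any perfect matching works.

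There is no substantial obstacle here: the corollary is essentially a repackaging of Theorem \ref{thm:4ecr}, Proposition \ref{pro:4econ}(ii), the brick-or-brace classification of \cite{KCLL19a}, and Corollary \ref{cor:bip}. The only point that merits explicit mention in the write-up is the short reduction of essential 4-edge-connectivity to 3-edge-connectivity needed to legitimize applying Corollary \ref{cor:bip} in the brace case.
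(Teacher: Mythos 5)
Your argument is correct and follows the paper's intended route: the paper itself dispatches this corollary in one line as ``a direct consequence of Theorem~\ref{thm:4ecr} and Proposition~\ref{pro:4econ}(ii),'' which is exactly your brick case, and the brace case you supply via Corollary~\ref{cor:bip} is the implicit companion needed to cover the brick-or-brace dichotomy from~\cite{KCLL19a}. The only small nit is in your reduction from essential 4-edge-connectivity to 3-edge-connectivity: after excluding $|X|=2$, you jump to $|X|,|\overline X|\ge 4$ without noting that $|X|=3$ is excluded by parity ($3|X|=2|E(G[X])|+|\partial(X)|$ forces $|X|$ and $|\partial(X)|$ to have the same parity, so a $2$-cut needs $|X|$ even); this is an easy repair and does not affect the soundness of the proof.
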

\subsection {Splicing and removable edges}

Let $G$ and $H$ be two vertex-disjoint
graphs. Let $u\in V(G)$ and $v\in V(H)$ be two vertices with the same degree and $\partial(\{u\})=\{uu_1,uu_2,\ldots,uu_d\}$ and
$\partial(\{v\})=\{vv_1,vv_2,\ldots,vv_d\}$. The \emph{splicing of $G$
and $H$ at $u$ and $v$}, denote by $G(u)\odot H(v)$ (or simply $G(u)\odot H$ or $G\odot H$ if no confusion occurs), is the graph
obtained from $G-u$ and $H-v$ by adding a new edge between the two
vertices $u_i$ and $v_i$ for each $i=1,2,\ldots,d$. The two vertices
$u$ and $v$ are called the \emph{splicing vertices},    every edge $u_iv_i$ in $\{u_1v_1,u_2v_2,\ldots,u_dv_d\}$ is called the \emph{splicing edge}. In particular, if $H=K_4$ and $u$ is a vertex of degree 3, then the splicing operation  $G(u)\odot K_4$ can be intuitively viewed as the operation that `inserts a triangle' at $u$. In this sense, we also call such an operation the {\it triangle-insertion} at $u$ of $G$ and denote  $G(u)\odot K_4$ simply by $G\langle u\rangle$.

We note that every edge (or vertex) in  $G\odot H$ corresponds uniquely an edge (or vertex other than the splicing vertex) in  $G$ or $H$.  With a mild abuse of language, we will  use the same label of the edge (or vertex) in $G\odot H$ as it is in $G$ or $H$, and vice versa.
The following propositions are about the splicing of two graphs.

\begin{pro}\label{pro:spl1}{\em \cite{CLM051}}
Any graph obtained by splicing two matching covered graphs
is also matching covered.
\end{pro}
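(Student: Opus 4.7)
The plan is to show directly that every edge of $G(u)\odot H(v)$ lies in some perfect matching of $G(u)\odot H(v)$, by gluing together a perfect matching of $G$ and a perfect matching of $H$ through exactly one of the splicing edges. The key observation is: if $M$ is a perfect matching of $G$ and $uu_i$ is the unique edge of $M$ incident with $u$, then $M\setminus\{uu_i\}$ is a perfect matching of $G-\{u,u_i\}$. Symmetrically, if $N$ is a perfect matching of $H$ containing $vv_i$, then $N\setminus\{vv_i\}$ is a perfect matching of $H-\{v,v_i\}$. Joining them through the splicing edge $u_iv_i$ produces
\[
(M\setminus\{uu_i\})\cup\{u_iv_i\}\cup(N\setminus\{vv_i\}),
\]
which is straightforwardly a perfect matching of $G(u)\odot H(v)$, since together the three pieces cover $V(G)\setminus\{u\}$, the new vertex $v_i$, and $V(H)\setminus\{v,v_i\}$, i.e.\ all of $V(G(u)\odot H(v))$.

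With this gluing device, the proof splits into three easy cases depending on where a given edge $e$ of $G(u)\odot H(v)$ lies. First, if $e=u_iv_i$ is itself a splicing edge, use the matching coveredness of $G$ and $H$ to pick perfect matchings $M\ni uu_i$ of $G$ and $N\ni vv_i$ of $H$, and splice them as above. Second, if $e\in E(G)\setminus\partial(u)$, take a perfect matching $M$ of $G$ containing $e$, let $uu_j$ be the (unique) edge of $M$ meeting $u$, pick a perfect matching $N$ of $H$ containing $vv_j$, and splice through $u_jv_j$; this yields a perfect matching of $G(u)\odot H(v)$ containing $e$. The third case, $e\in E(H)\setminus\partial(v)$, is symmetric. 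Connectedness of $G(u)\odot H(v)$ is automatic: $G$ and $H$ are connected (matching covered graphs being connected by definition) and the $d\ge 1$ splicing edges join the two sides.

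I do not expect a serious obstacle. The only point that needs care is that splicing rigidly pairs $u_i$ with $v_i$, so once a matching of $G$ is chosen its $u$-edge $uu_j$ dictates a specific index $j$, and we must then produce a perfect matching of $H$ containing the specific edge $vv_j$ rather than an arbitrary edge at $v$. This is exactly what matching coveredness of $H$ delivers, so the entire argument is one uniform gluing step applied case by case.
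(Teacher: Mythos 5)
The paper does not prove this proposition; it is quoted as a known result from Carvalho, Lucchesi and Murty [CLM051], so there is no in-paper argument to compare against. Your proof is correct and is the standard argument: the gluing step $(M\setminus\{uu_i\})\cup\{u_iv_i\}\cup(N\setminus\{vv_i\})$ does produce a perfect matching of $G(u)\odot H(v)$ (it is a disjoint union of matchings covering $V(G)\setminus\{u,u_i\}$, $\{u_i,v_i\}$ and $V(H)\setminus\{v,v_i\}$, i.e.\ all of $V(G(u)\odot H(v))$), and your three cases exhaust $E(G(u)\odot H(v)) = \bigl(E(G)\setminus\partial_G(u)\bigr)\cup\{u_1v_1,\ldots,u_dv_d\}\cup\bigl(E(H)\setminus\partial_H(v)\bigr)$. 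The only point worth spelling out slightly more is connectedness: every component of $G-u$ contains some $u_i$ (since $G$ is connected, a path from any vertex of that component to $u$ ends in an edge $u_iu$), and likewise for $H-v$, so the $d\geq 1$ splicing edges tie everything together. With that minor elaboration the argument is complete.
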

\begin{pro}\label{lem:3-sp}{\em\cite{CLM051}}
 Any splicing of two cubic bricks is a cubic brick.
\end{pro}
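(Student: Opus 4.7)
The plan is to apply the Edmonds--Lov\'asz--Pulleyblank characterization (recalled in the introduction) that a graph is a brick exactly when it is 3-connected and bicritical, and to verify these two properties directly for $G\odot H$. The cubicity of $G\odot H$ is immediate from the definition: each $u_i$ loses the edge $uu_i$ but gains the splicing edge $u_iv_i$, and symmetrically for each $v_i$, while all other vertices retain their original neighbourhoods.

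For 3-connectedness, I will argue by contradiction: assume $\{a,b\}$ is a 2-vertex cut of $G\odot H$, and split on the sides of $a$ and $b$. In the split case $a\in V(G-u)$ and $b\in V(H-v)$, the two subgraphs $G-\{u,a\}$ and $H-\{v,b\}$ are each connected by the 3-connectedness of $G$ and $H$, and since $a$ and $b$ together destroy at most two of the three splicing edges, at least one splicing edge should survive to connect the two sides. In the one-sided case $\{a,b\}\subseteq V(G-u)$, with the other being symmetric, the subgraph $H-v$ is untouched and connected; the key observation I plan to exploit is that even when $T=\{a,b,u\}$ happens to be a 3-cut of $G$, every component of $G-T$ must contain a neighbour of each vertex of $T$, since otherwise a proper subset of $T$ would be a cut of $G$ of size at most 2, contradicting 3-connectedness. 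Applied to $u\in T$, this forces each component of $G-T$ to contain some neighbour $u_j\notin\{a,b\}$ of $u$, which is then linked to the connected $H-v$ via the splicing edge $u_jv_j$, and so all components merge in $G\odot H-\{a,b\}$.

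For bicriticality, I will fix distinct $x,y\in V(G\odot H)$ and consider the same case split. When $x\in V(G-u)$ and $y\in V(H-v)$, the perfect matchings of $G-\{u,x\}$ and $H-\{v,y\}$ supplied by the bicriticality of $G$ and $H$ cover disjoint vertex sets whose union is exactly $V(G\odot H-\{x,y\})$, so their union is the desired perfect matching and no splicing edge is needed. When $\{x,y\}\subseteq V(G-u)$, I will take a perfect matching $M_G$ of $G-\{x,y\}$, which saturates $u$ via some edge $uu_i$, then invoke bicriticality of $H$ to pick a perfect matching $M_H$ of $H-\{v,v_i\}$; the set $(M_G\setminus\{uu_i\})\cup M_H\cup\{u_iv_i\}$ will then be a perfect matching of $G\odot H-\{x,y\}$, and the case $\{x,y\}\subseteq V(H-v)$ is symmetric. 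Non-bipartiteness of $G\odot H$ follows for free once bicriticality is known, since a bipartite graph on at least 4 vertices cannot remain perfectly matchable after deleting two vertices from the same colour class.

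The only genuinely delicate point will be the one-sided sub-case of 3-connectedness, where the component structure of $G-\{a,b,u\}$ must be controlled when this triple happens to form a 3-cut of $G$; every other verification reduces to assembling perfect matchings of the two bricks with at most one splicing edge.
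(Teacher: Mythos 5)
The paper cites this proposition to Carvalho, Lucchesi and Murty and does not reproduce a proof, so there is no in-paper argument to compare against; I can only assess your proof on its own.

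Your argument is correct. You invoke exactly the right characterization (3-connected plus bicritical, stated in the Introduction via Edmonds et al.), verify both properties by the natural two-case split on where the deleted pair sits relative to $V(G-u)$ and $V(H-v)$, and derive non-bipartiteness from bicriticality. The bicriticality step is routine and your matching assemblies are all valid. The one genuinely nontrivial point is the one-sided case of 3-connectedness, and you handle it correctly: if $\{a,b\}\subseteq V(G-u)$ and $T=\{a,b,u\}$ separates $G$, then because $G$ is 3-connected and $|T|=3$, each vertex of $T$ (in particular $u$) has a neighbour in every component of $G-T$; such a neighbour is some $u_j\notin\{a,b\}$, and the splicing edge $u_jv_j$ ties that component to the connected graph $H-v$. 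One could phrase the component argument slightly more explicitly (it applies only when $G-\{a,b,u\}$ is actually disconnected; when it is connected the claim is trivial since at most two of the $u_i$ can lie in $\{a,b\}$), but the logic is sound. This is the standard way such splicing lemmas are proved, and I would expect it to match the cited source in substance.
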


\begin{pro}\label{pro:e3}
Let $uv$ be a (removable) edge of a matching covered graph $G$ with $d_G(v)=3$. Then no splicing edge in $G\langle v\rangle$  is removable.
\end{pro}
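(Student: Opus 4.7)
The plan is to show that deleting any splicing edge leaves the corresponding vertex of the inserted triangle with only two neighbors, after which the opposite triangle edge can no longer be extended to a perfect matching, so matching coverage fails. Write the neighbors of $v$ in $G$ as $u, x, y$, and let $a, b, c$ denote the three non-splicing vertices of the $K_4$ used in the triangle-insertion $G\langle v\rangle = G(v)\odot K_4$. By the definition of splicing, the three splicing edges are exactly $ua$, $xb$, and $yc$, and $a, b, c$ span a triangle in $G\langle v\rangle$. Because the triangle $abc$ acts symmetrically on the three splicing edges, it is enough to handle $ua$; the argument transfers verbatim, after relabeling, to $xb$ and $yc$.

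For $ua$, I proceed by contradiction. If $ua$ were removable, then $G\langle v\rangle - ua$ would be matching covered, and in particular every edge of it would lie in a perfect matching. The witnessing edge I have in mind is the triangle edge $bc$. Once $ua$ has been deleted, the vertex $a$ has only the two neighbors $b$ and $c$ in $G\langle v\rangle - ua$. Any perfect matching containing $bc$ therefore exhausts both candidate partners of $a$, leaving $a$ unmatched, which is impossible. Hence $bc$ lies in no perfect matching of $G\langle v\rangle - ua$, contradicting matching coverage, so $ua$ is not removable.

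The hypothesis that $uv$ itself is removable in $G$ is not actually needed for this argument, which presumably explains the parenthesization in the statement; the only background fact I invoke is that $G\langle v\rangle$ is matching covered, so that ``removable'' is meaningful in $G\langle v\rangle$, and this follows from Proposition \ref{pro:spl1} together with the matching coverage of $K_4$. I do not foresee any substantive obstacle: the entire proof reduces to the one-line local observation that removing a splicing edge strands the corresponding triangle vertex at degree $2$, which forces the opposite triangle edge to be forbidden in every perfect matching.
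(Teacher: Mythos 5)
Your proof is correct and is essentially the same argument as the paper's: both observe that after deleting a splicing edge, the corresponding triangle vertex retains only its two triangle neighbors, so the opposite triangle edge cannot be extended to a perfect matching. Your version merely spells out the contradiction framing that the paper leaves implicit.
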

 \begin{proof}
Let $xyz$ be the insertion triangle and $ux$  an arbitrary edge of the three splicing edges in $G\langle v\rangle$. Then $yz$ cannot lie in any perfect matching of $G\langle v\rangle-ux$ since $x$ is isolated in $G\langle v\rangle-ux-\{y,z\}$.
 \end{proof}

 For the removability of an edge that is not a splicing edge, we have the following proposition.
\begin{lem}\label{pro:spl} Let $G_1$ and $G_2$ be matching covered graphs, $u\in V(G_1)$, $v\in V(G_2)$, $d_{G_1}(u)=3$ and $d_{G_2}(v)=3$.
And let $H=G_1(u)\odot G_2(v)$. For any edge $e$ in $G_1$
that is not incident with $u$,
\\
{\rm (i).} if $e$ is removable in $G_1$, then $e$ is removable in $H$; and\\
{\rm (ii).} if $e$ is removable in $G_1\langle u\rangle$ and $\partial_H(V(G_1-u))$ is good in $H$, then $e$ is removable in $H$.
\end{lem}
\begin{proof}
 {(i). Since $G_1-e$ is matching covered and   $d_{G_1-e}(u)=3$, (i) follows directly by Proposition \ref {pro:spl1}.

(ii). Let $\partial_{G_1}(\{u\})=\{ux,uy,uz\},\partial_{G_2}(\{v\})=\{vx',vy',vz'\}$. Let $C'=\{xx',yy',zz'\}$ and  $C''=\{xx'',yy'',zz''\}$ be the set of splicing edges corresponding to the splicing of $G_1(u)\odot G_2(v)$ and  $G_1\langle u\rangle$, respectively, i.e., $\partial_H(V(G_1-u))=\{xx',yy',zz'\}$ and  $\partial_{G_1\langle u\rangle}(V(G_1-u))=\{xx'',yy'',zz''\}$. Let $e_1$ be an arbitrary edge in $H-e$. We will show that $e_1$ lies in some perfect matching of $H-e$ to complete the proof.

We first assume that $e_1\in E(G_1)$. Let $M_{e_1}$ be a perfect matching of $G_1\langle u\rangle-e$ that contains $e_1$. If $M_{e_1}$ contains exactly one edge in $C''$, say $zz''\in M_{e_1}$, then $(M_{e_1}\cap E(G_1))
\cup M_1$ is a perfect matching of $H-e$ that contains $e_1$, where $M_1$ is a perfect matching of $G_2$ containing $vz'$. We now assume that $C''\subseteq M_{e_1}$. Since $C'$ is a good 3-cut in $H$, $H$ has a perfect matching $M_2$ that contains all the edges in $C'$. Therefore, $(M_{e_1}\cap E(G_1))\cup (M_2\cap E(G_2))\cup C'$ is a perfect matching of $H-e$ that contains $e_1$.

We now assume that $e_1\in E(G_2)$. Let $M_3$ be a perfect matching of $G_2$ containing $e_1$.
It is clear that $|M_3\cap \partial_{G_2}(\{v\})|=1$, say $vz'\in M_3$.  Since $G_1\langle u\rangle-e$ is matching covered, $G_1\langle u\rangle-e$ has a perfect matching $M_4$ that contains $x''y''$ and, hence, contains $zz''$ as the only edge in $C''$ since $x''y''z''$ is a triangle of $G_1\langle u\rangle$.
 Then $M_3\cup (M_4\cap E(G_1))$ is a perfect matching of $H-e$ that contains $e_1$.}
\end{proof}

The following corollary will be useful for the case when the two vertices are splicing
vertices in different splicings.

\begin{cor}\label{cor:spl2} Assume that $G_0$, $G_1$ and $G_2$ are matching covered graphs, $u_i\in V(G_i)$ ($i=1,2$). Let $v_1,v_2\in  V(G_0) $ and $G=(G_0(v_1)\odot G_1(u_1))(v_2)\odot G_2(u_2)$. For $i=1$ and $2$, let $E_i$ be the set of edges in $E(G_i)\setminus \partial (\{u_i\})$ that are removable in $G_0(v_i)\odot G_i(u_i)$. Then every edge in $E_1\cup E_2$ (if not empty) is removable in $G$.
\end{cor}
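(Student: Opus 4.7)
The plan is to reduce Corollary \ref{cor:spl2} directly to Proposition \ref{pro:spl1}. By the symmetry between the indices $1$ and $2$, I will treat only the case $e \in E_1$; the case $e \in E_2$ follows by interchanging the roles of the two splicings. This interchange is legitimate because the two splicings take place at distinct vertices $v_1,v_2\in V(G_0)$, so their order is immaterial, and $G$ can equally well be written as $H(v_2)\odot G_2(u_2)$ with $H:=G_0(v_1)\odot G_1(u_1)$, or as $H'(v_1)\odot G_1(u_1)$ with $H':=G_0(v_2)\odot G_2(u_2)$.

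Fix $e\in E_1$. By definition $e\in E(G_1)\setminus \partial(\{u_1\})$, so both endpoints of $e$ lie in $V(G_1)\setminus\{u_1\}$; in particular $e$ is not incident with the splicing vertex $v_2\in V(G_0)$, nor is it among the splicing edges created at $(v_1,u_1)$. Hence deletion of $e$ commutes with the second splicing and we have the identification $G-e=(H-e)(v_2)\odot G_2(u_2)$. The degree condition $d_{H-e}(v_2)=d_{G_2}(u_2)$ needed for this splicing to be well-defined is inherited from $H$ since $e$ is not incident with $v_2$.

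Now apply Proposition \ref{pro:spl1}: the graph $H-e$ is matching covered because $e\in E_1$ means $e$ is removable in $H$, and $G_2$ is matching covered by hypothesis; therefore the splicing $(H-e)(v_2)\odot G_2(u_2)=G-e$ is matching covered, which is precisely the statement that $e$ is removable in $G$. I do not anticipate a substantial obstacle; the only point that requires care in the writeup is the bookkeeping that shows the two splicings commute, which amounts to checking that the deleted vertex sets $\{v_1,u_1\}$ and $\{v_2,u_2\}$ are disjoint and that the edges introduced by one splicing are not incident with the vertices deleted by the other.
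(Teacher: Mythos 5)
Your proposal is correct and follows essentially the same route as the paper: the paper's one-line argument cites Proposition \ref{pro:spl}, whose part (i) is itself just the observation you make directly, namely that deleting an edge $e$ not incident with the splicing vertex commutes with the splicing, so $G-e=(H-e)(v_2)\odot G_2(u_2)$ is matching covered by Proposition \ref{pro:spl1}. The symmetry step handling $E_2$ via $(G_0(v_1)\odot G_1)(v_2)\odot G_2=(G_0(v_2)\odot G_2)(v_1)\odot G_1$ is also exactly what the paper does.
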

\begin{proof}Note that every edge in $E_1$ is not a splicing edge in  the two splicings.
 By Lemma \ref{pro:spl} (i), every edge in $E_1$ is \r~in $G$.  Similarly, every edge in $E_2$ is \r~in $G$, since
 $(G_0(v_1)\odot G_1(u_1))(v_2)\odot G_2(u_2)= (G_0(v_2)\odot G_2(u_2))(v_1)\odot G_1(u_1)$. So the result follows.
\end{proof}



\subsection {$\Delta$-replacements and removable edges}

 For a 3-cut $\partial (X)$ of a  graph $G$ with $|\overline{X}|\geq 5$, we denote  $G^{\Delta}(X)=(G/\overline{X}\rightarrow x)\langle x\rangle$. We call $G^{\Delta}(X)$  the {\em $\Delta$-replacement}  of $X$ in $G$ and call the triangle inserted at $x$ the {\em replacement-triangle}.
\begin{pro}\label{pro:e}
Let $\partial (X)$ be a 3-cut of {a brick $G$ with $|\overline{X}|\geq 5$.} Then $G^{\Delta}(X)$ is a brick.\end{pro}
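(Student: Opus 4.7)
The plan is to verify that $G^{\Delta}(X)$ satisfies the three defining properties of a brick: matching-coveredness, non-bipartiteness, and the absence of non-trivial tight cuts; the statement for $G^{\Delta}(\overline X)$ will then follow by a symmetric argument. Since $G$ is a cubic brick, it is $2$-edge-connected, so Proposition \ref{pro:3cut} shows that the $3$-cut $\partial(X)$ is separating, and hence $H := G/\overline X \to x$ is matching covered. By definition $G^{\Delta}(X) = H\langle x\rangle$ is the splicing of $H$ with $K_4$ at $x$, which is matching covered by Proposition \ref{pro:spl1}. Non-bipartiteness is immediate from the inserted triangle $T = \{v_1, v_2, v_3\}$.

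The heart of the argument is ruling out non-trivial tight cuts. Suppose for contradiction that $\partial_{G^{\Delta}(X)}(S)$ is a non-trivial tight cut, and let $u_i \in V(X)$ denote the neighbor of $v_i$ along the splicing edge $v_iu_i$ for $i = 1, 2, 3$. I would distinguish two cases based on how $T$ sits across $S$. When $T$ lies entirely on one side, say $T \subseteq S$, I would pull the cut back to $G$ by setting $S' := \overline X \cup (S \cap V(X))$. Every perfect matching $M$ of $G$ uses either one or three edges of $\partial(X)$, and in either case $M$ lifts naturally to a perfect matching $M'$ of $G^{\Delta}(X)$---completing with the triangle edge $v_jv_k$ when one cross edge is used, or with all three splicing edges when three are used---with $|M \cap \partial_G(S')| = |M' \cap \partial(S)|$. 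Tightness of $\partial(S)$ therefore propagates to $\partial_G(S')$; the bounds $|X|, |\overline X| \geq 5$ together with the non-triviality of $\partial(S)$ rule out $|S'| \in \{1, |V(G)|-1\}$, contradicting that $G$ is a brick.

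The split-triangle case, say $T \cap S = \{v_1, v_2\}$ and $T \cap \overline S = \{v_3\}$, is the more delicate one. There I would exploit matching-coveredness of $G^{\Delta}(X)$ to produce perfect matchings realizing each of the four configurations on $T \cup \{u_1, u_2, u_3\}$: either exactly one triangle edge together with the single remaining splicing edge (three possibilities), or all three splicing edges. Imposing $|M \cap \partial(S)| = 1$ on a representative of each configuration yields simultaneous constraints on the indicators $[u_i \in \overline S]$ and on the number of $G[V(X)]$-edges of $M$ crossing $\partial(S)$. A further split on whether $u_3 \in S$ or $u_3 \in \overline S$ then reduces either to a pullback to a non-trivial tight cut of $G$ as in the first case, or to a direct contradiction with the $3$-connectivity of $G$, since in the latter situation $V(X)$ would be forced to split without internal crossing edges while only three cross edges to $\overline X$ are available.

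The main obstacle is precisely this split-triangle case: several perfect-matching configurations have to be tracked in parallel, and the size bounds $|X|, |\overline X| \geq 5$ have to be invoked delicately to ensure the induced cut in $G$ is non-trivial. Once both cases are closed, $G^{\Delta}(X)$ is matching covered, non-bipartite, and has no non-trivial tight cut, hence is a brick; the argument for $G^{\Delta}(\overline X)$ is completely parallel.
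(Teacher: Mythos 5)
Your proposal takes a genuinely different route from the paper. You verify the three defining conditions of a brick directly---matching-coveredness, non-bipartiteness, and absence of non-trivial tight cuts---whereas the paper invokes the Edmonds--Lov\'asz--Pulleyblank characterization (a brick is a $3$-connected bicritical graph). The paper's argument is short: $3$-connectivity is stated as clear, and bicriticality is established by taking any pair $u,v$ in $G^{\Delta}(X)$, replacing each triangle vertex $x_i$ among them by the corresponding vertex $x_i''$ on the $\overline X$ side of $G$, extracting a perfect matching $M$ of $G$ minus that pair, and observing that $(M\cap E(G[X]))\cup (M\cap\partial_G(X))$ extends to a perfect matching of $G^{\Delta}(X)-\{u,v\}$ after possibly adding one triangle edge. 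This avoids all case analysis on tight cuts.

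Your tight-cut route does work, and the full-triangle case is handled cleanly, but the split-triangle case is more delicate than the sketch suggests, and two points in it need repair. First, the ``direct contradiction with $3$-connectivity'' (which is the subcase $T\cap\overline S=\{v_3\}$, $u_3\in S$) is really a contradiction with \emph{connectivity together with matching-coveredness}: one finds that no perfect matching of $G^{\Delta}(X)$ can use any edge of $I:=E(G[X])\cap\partial(S)$, forcing $I=\emptyset$ by matching-coveredness, which then disconnects $\overline S\cap X$ from the rest. Second, and more importantly, in the other subcase ($u_3\in\overline S$) the pullback is \emph{not} the same as in the full-triangle case: the natural set $\overline X\cup(S\cap X)$ yields a cut of even size $|I|+1$ that perfect matchings hit $0$ or $2$ times, not a tight cut. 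One must instead take something like $W:=(\overline S\cap X)\cup\{x_3''\}$ (where $x_3''$ is the $\overline X$-neighbor of $u_3$), for which $\partial_G(W)=I\cup\{\text{the two }\overline X\text{-edges at }x_3''\}$, and a short check across the four matching configurations shows $\partial_G(W)$ is a non-trivial tight cut of $G$, contradicting that $G$ is a brick. So the strategy is sound and closes, but the details are heavier than the outline conveys, and the paper's bicriticality argument is considerably more economical.
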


 \begin{proof}
{It is clear that $G^{\Delta}(X)$ is 3-connected. We show that $G^{\Delta}(X)$ is bicritical, that is,
 $G^{\Delta}(X)-\{u,v\}$ has a perfect matching for any two vertices $u,v$  in   $G^{\Delta}(X)$.

Let  $x_1x_2x_3$ label the replacement-triangle of  $G^{\Delta}(X)$, and $\partial_{G^{\Delta}(X)}(\{x_1,x_2,x_3\})=\{x_1x'_1,x_2x'_2,x_3x'_3\}$ and $\partial_{G}({X})=\{x_1'x''_1,x_2'x''_2,x_3'x''_3\}$.
Set $w=x_i''$ if $u=x_i$; $w=u$ otherwise. And set $z=x_i''$ if $v=x_i$; $z=v$ otherwise.

 Since $G$ is a brick, $G-\{w,z\}$ has a perfect matching, say $M$. Set $M'=(M\cap G[X])\cup (M \cap \partial_G (X)) $. Since $|X|$ is odd,
 it can be checked that  $M'$ covers all  vertices in  $G^{\Delta}(X)-\{u,v\}$, or  all  vertices in  $G^{\Delta}(X)-\{u,v\}$ except exactly two vertices in $\{x_1,x_2,x_3\}$. In the latter case, $M'$, together with the edge between the two vertices that are not covered by $M'$,  is a perfect matching of $G^{\Delta}(X)-\{u,v\}$
 (note that  any two vertices in $\{x_1,x_2,x_3\}$ has an edge).
}
 \end{proof}

\begin{pro}\label{pro:e2} Let $G$ be a matching covered graph, and $\partial (X)$ be a good  3-cut of $G$ such that $|\overline{X}|\geq 5$.
 If $ e\in E(G[X]) $ and $e$ is removable in $G^{\Delta}(X)$,  then $e$ is removable in $G$.
 \end{pro}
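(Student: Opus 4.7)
I will prove that $e$ is removable in $G$ by showing every edge $f \in E(G) \setminus \{e\}$ lies in some perfect matching of $G - e$.

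Label the cut edges of $G$ as $\partial(X) = \{x_i'y_i : i = 1,2,3\}$ with $x_i' \in X$ and $y_i \in \overline{X}$, and let $x_1x_2x_3$ be the replacement-triangle of $G^{\Delta}(X)$, so that the cut edges of $G^{\Delta}(X)$ are $x_ix_i'$. Because the triangle has odd order, every perfect matching $M'$ of $G^{\Delta}(X)$ meets $\{x_1x_1', x_2x_2', x_3x_3'\}$ either in exactly one edge (with the opposite triangle edge to complete the match on the triangle) --- call this \emph{type I} --- or in all three edges --- \emph{type II}. Since $|X|$ must be odd (so $|V(G^{\Delta}(X))| = |X|+3$ is even), by parity the same dichotomy applies to perfect matchings of $G$ relative to $\partial(X)$.

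The plan splits on the location of $f$. If $f \in E(G[X]) \cup \partial(X)$, let $\tilde f$ be the corresponding edge of $G^{\Delta}(X)$ (the same edge if $f \in E(G[X])$; the cut edge $x_ix_i'$ if $f = x_i'y_i$), and pick a PM $M'$ of $G^{\Delta}(X) - e$ through $\tilde f$, which exists since $G^{\Delta}(X) - e$ is matching covered. Translating $M'$ back to $G$ (replacing each $x_ix_i' \in M'$ by $x_i'y_i$ and dropping triangle edges) gives a matching $N_X$ of $G$ covering $X$ together with either one $y_i$ (type I) or all of $y_1, y_2, y_3$ (type II), with $f \in N_X$ and $e \notin N_X$. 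I then extend $N_X$ by a perfect matching of $G[\overline{X}] - \{y_i\}$ or $G[\overline{X}] - \{y_1,y_2,y_3\}$, supplied by a suitably chosen PM of $G$ through the relevant cut edge(s).

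If instead $f \in E(G[\overline{X}])$, take a PM $\tilde M$ of $G$ containing $f$ (exists by matching coveredness of $G$). If $e \notin \tilde M$, we are done; otherwise we replace $\tilde M \cap (E(G[X]) \cup \partial(X))$ with the $X$-side of a PM of $G^{\Delta}(X) - e$ of the same type as $\tilde M$. A type-I PM of $G^{\Delta}(X) - e$ with a prescribed cut edge $x_ix_i'$ always exists: the triangle edge $x_jx_k$ with $\{j,k\} = \{1,2,3\} \setminus \{i\}$ lies in some PM of $G^{\Delta}(X) - e$, which is then forced to be type I with this cut edge.

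The principal obstacle is handling the type-II case, in which we need either a perfect matching of $G[\overline{X}] - \{y_1,y_2,y_3\}$ (to extend $N_X$ in the first case) or a type-II PM of $G^{\Delta}(X) - e$ through $f$ (for the replacement in the second case). I plan to sidestep this via an alternating-path switch: given a type-II PM $M''$ of $G^{\Delta}(X) - e$ through $f$ and any type-I PM $M_i$ of $G^{\Delta}(X) - e$ with cut edge $x_ix_i'$, the symmetric difference $(M'' \cap E(G[X])) \triangle (M_i \cap E(G[X]))$ is a disjoint union of cycles together with a unique path $P$ from $x_j'$ to $x_k'$ in $G[X]$. Then $N := (M'' \cap E(G[X])) \triangle P$ is a near-perfect matching of $G[X]$ covering $X \setminus \{x_i'\}$, and $N \cup \{x_ix_i', x_jx_k\}$ is a type-I PM of $G^{\Delta}(X) - e$. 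By choosing $i$ from the three available options so that $f$ is not on $P$, we preserve $f \in N$ and reduce to the easy type-I case; a dual alternating-path argument on the $\overline{X}$-side (applied to the two possible forms of $\tilde M$) handles the second case similarly.
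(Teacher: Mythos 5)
The paper proves this in one line by reduction: writing $G_1 = G/\overline{X}\to x$, it observes $G^{\Delta}(X) = G_1\langle x\rangle$ and $G = G_1(x)\odot (G/X)$, and then invokes Proposition \ref{pro:spl}(ii), whose own proof handles the hard case (a perfect matching of $G^{\Delta}(X)-e$ using all three replacement-triangle cut edges) by appealing to the hypothesis that $\partial_G(X)$ is a \emph{good} cut, so $G$ has a perfect matching containing all three cut edges. Your proposal instead gives a self-contained argument, which is a genuinely different route; the dichotomy into type-I and type-II matchings and the translation across the cut are all correct, and the type-I subcase is essentially fine modulo invoking that $\partial(X)$ is separating (so $G/X$ is matching covered, giving the needed near-perfect matching of $G[\overline{X}]$ missing a prescribed $y_i$).

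However, the way you handle the type-II case is where the real content lives, and it has a genuine gap. For $f\in E(G[X])$, you produce a type-II perfect matching $M''$ of $G^{\Delta}(X)-e$ through $\tilde f$ and propose to switch it to type I by flipping along the path $P$ in $(M''\cap E(G[X]))\triangle(M_i\cap E(G[X]))$, asserting that $i$ (among the three options) can be chosen so that $f\notin P$. That assertion is never justified and is not obviously true: the path $P_i$ depends on both $i$ and the choice of type-I matching $M_i$, and there is no a priori reason that $f$ must miss at least one of them. Without this, the switch may destroy $f$, and you have no type-I perfect matching of $G^{\Delta}(X)-e$ through $\tilde f$. Likewise, the second case ($f\in E(G[\overline{X}])$ with a type-II $\tilde M$ through $e$) requires producing a type-II perfect matching of $G^{\Delta}(X)-e$ covering $X\setminus\{x_1',x_2',x_3'\}$; you defer this to ``a dual alternating-path argument on the $\overline{X}$-side,'' but this is not spelled out and runs into the same difficulty of preserving a prescribed edge under the switch. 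The cleaner fix — and essentially what the paper's Proposition \ref{pro:spl}(ii) does — is to invoke that $\partial(X)$ is good (which holds when $G$ is a brick, the setting in which the proposition is actually used), so that $G[\overline{X}]-\{y_1,y_2,y_3\}$ does have a perfect matching and the type-II case for $f\in E(G[X])$ extends directly, with no alternating-path switch needed at all. If you want to avoid assuming goodness, you need a real argument for why a type-I matching through $\tilde f$ must exist; the alternating-path sketch as written does not supply one.
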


 \begin{proof}Let $G_1=G/\overline{X}\rightarrow x$. Then $G^{\Delta}(X)=G_1\langle x \rangle$. By Lemma \ref{pro:spl}, the result follows.
 \end{proof}

\section{Proof of Theorem \ref{thm:main}}
\label{sec:non-bi}

{In this section, we consider only cubic graphs. As an extension of triangle-insertion, for vertices $v_1,v_2,\ldots,v_k$ of $V(G)$, we denote the graph obtained from $G$ by the triangle-insertions at  $v_1,v_2,\ldots,v_k$ by $G\langle v_1,v_2,\ldots,v_k\rangle$.}
For non-trivial 3-cuts $\partial (X_1), \partial (X_2),\ldots, \partial (X_t) $ $(t\geq 1)$ of $G$,  $G'$ is the graph  obtained from $G$ by contracting $X_1,X_2,\ldots,X_t$ to $t$ vertices $x_1,x_2,\ldots,x_t$, respectively, where $X_i\cap X_j=\emptyset$ ($ i\neq j$, $i,j\in [1,2,\ldots, t]$).  For convenience, we denote $X_1\cup X_2\cup \cdots\cup X_t$ by
$\{x_1,x_2,\ldots,x_t\}^{-1}_G$, or simply $\{x_1,x_2,\ldots,x_t\}^{-1}$ without confusion. It can be seen that $\{x_1,x_2,\ldots,x_t\}^{-1}_G=V(G)\setminus V(G')$.

Let $\mathscr{G}$ be the family of $K_4$ and the cubic graphs obtained from $K_4$ by a sequence of successive triangle-insertions. It is clear that every graph $G$ in $\mathscr{G}$ is a cubic brick by Proposition \ref{lem:3-sp}, and except $K_4$, every vertex in $G$ lies in at most one triangle.  Moreover, for $G\in\mathscr{G}$ and $G\not=K_4$, the graph  obtained from $G$ by contracting $k$ ($k\geq 1$)  triangles  to $k$ vertices, respectively, is still in $\mathscr{G}$.

\begin{lem}\label{lem:nnb1} Let $G\in \mathscr{G}$ and let $uvw$ be a triangle of $G$. If  $|V(G)|\geq 10$, then
$G$ has a  non-trivial 3-cut $\partial({Y})$ such that $|Y|\geq 5$,  $\{u,v,w\} \cap  Y=\emptyset$ and $G[Y]$
has a matching of size at least $ (|Y|-3)/8+1$, each edge of which is removable in $G$.
\end{lem}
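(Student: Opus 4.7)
My plan is to argue by induction on $|V(G)|$, leveraging the closure of $\mathscr{G}$ under triangle-contraction. Since $G\in\mathscr{G}\setminus\{K_4\}$, every vertex lies in at most one triangle, and a short recursion on the triangle-insertion structure (each insertion changes the triangle count by $0$ or $+1$ starting from the four triangles of $K_4$) shows that every $G\in\mathscr{G}\setminus\{K_4\}$ contains at least two vertex-disjoint triangles. In particular I can always select a triangle $\{a,b,c\}$ of $G$ disjoint from $\{u,v,w\}$.

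The base case is $|V(G)|=10$, handled by a finite enumeration of the 10-vertex members of $\mathscr{G}$, excluding the configuration in which $G-\{u,v,w\}\cong G_1^*$. In each remaining case I would simply take $Y=\{a,b,c\}$, so $|Y|=3$ and the required matching size reduces to $1$; the existence of a removable edge of the triangle $abc$ in $G$ is then certified by Corollary \ref{cor:ands} applied with a direct removability check on the smaller brick $G/\{a,b,c\}$.

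For the inductive step $|V(G)|\ge 12$, I form $G' = G/\{a,b,c\}\to a^*$, which lies in $\mathscr{G}$ and has $|V(G')|=|V(G)|-2\ge 10$. The triangle $uvw$ survives in $G'$ since $\{a,b,c\}$ is disjoint from it. Assuming $G'-\{u,v,w\}\ncong G_1^*$ (otherwise I would retry with a different disjoint triangle of $G$, available in sufficiently large members of $\mathscr{G}$), the induction hypothesis supplies a non-trivial 3-cut $\partial(Y')$ of $G'$ with $\{u,v,w\}\cap Y'=\emptyset$ and a matching $M'\subseteq E(G'[Y'])$ of size at least $(|Y'|-3)/7+1$, each edge removable in $G'$. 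If $a^*\notin Y'$, I take $Y=Y'$: the cut edges are unchanged, Proposition \ref{pro:spl}(i) preserves the removability of each edge of $M'$ under the splicing $G=G'\langle a^*\rangle$, and the bound carries over. If $a^*\in Y'$, I take $Y=(Y'\setminus\{a^*\})\cup\{a,b,c\}$, so $|Y|=|Y'|+2$ and $\partial(Y)=\partial(Y')$; edges of $M'$ away from $a^*$ lift to removable edges by Proposition \ref{pro:spl}(i), and any edge $a^*v\in M'$ can be swapped for the opposite edge of the triangle $abc$, which is removable in $G$ by Corollary \ref{cor:ands}. Since the required bound grows by only $2/7$ while $|Y|$ grows by $2$, at most one extra matching edge is ever needed, and it can be drawn from the newly available edges around $\{a,b,c\}$ via Corollary \ref{cor:ands} or Proposition \ref{pro:easy}.

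The main obstacle I anticipate is the bookkeeping in the second case of the lifting: simultaneously exchanging an $a^*$-incident edge for a triangle edge to preserve removability and, when the integer ceiling demands it, adjoining a further disjoint removable matching edge in the immediate neighborhood of $\{a,b,c\}$ without colliding with the rest of the lifted matching. A secondary difficulty is the base-case enumeration and the handling of the exceptional configuration $G_1^*$, which likely governs the precise choice of disjoint triangle $\{a,b,c\}$ at each inductive step so as to avoid reducing to the forbidden structure in $G'$.
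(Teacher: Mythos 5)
Your proposal takes a genuinely different route from the paper. The paper does not induct on $|V(G)|$ within this lemma: it contracts, in up to three successive rounds, \emph{all} triangles of $G$ other than $uvw$ (producing $G_1,G_2,G_3$ with contracted-vertex sets $S_1,S_2,S_3$) and then carries out an explicit finite case analysis on the resulting local configurations ($G_1^*,\dots,G_4^*$, the graphs of Figure~\ref{fig:263}, and $H_s$), certifying removability edge by edge via Propositions~\ref{pro:easy} and~\ref{pro:spl} and Corollary~\ref{cor:ands}, and checking the inequality $(|Y|-3)/7+1$ numerically in each configuration. You instead contract a single disjoint triangle per step and try to lift the bound inductively. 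The two are not cosmetically different: the paper's layered contraction is precisely what lets it treat the exceptional configuration $G_1^*$ by going one level deeper, whereas the single-step version puts all of that pressure on the lifting.

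That lifting step has a genuine gap. When $a^*\in Y'$ you set $Y=(Y'\setminus\{a^*\})\cup\{a,b,c\}$, so $|Y|=|Y'|+2$ and the required bound rises by $2/7$; since the bound is not an integer this can raise the ceiling by $1$, and then you really do need an extra matching edge. But the ``newly available edges around $\{a,b,c\}$'' cannot supply it. The three edges of $\partial_G(\{a,b,c\})$ are splicing edges of $G=G'\langle a^*\rangle$ and are never removable (Proposition~\ref{pro:e3}). The only new removable candidates are the triangle edges $ab,ac,bc$, any two of which share a vertex, so at most one can enter a matching. In the sub-case $a^*v\in M'$ you already spend that one edge ($bc$, via Corollary~\ref{cor:ands}) as a \emph{replacement} for the non-removable lifted edge $av$; the size is unchanged and nothing is left to add. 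In the sub-case where $a^*$ is not covered by $M'$, Corollary~\ref{cor:ands} certifies $bc$ only if some edge incident to $a^*$ is removable in $G'$, which you have not established and which is false in general. So the inequality does not survive the inductive step. As a secondary point, the ``retry with a different disjoint triangle'' to dodge $G'-\{u,v,w\}\cong G_1^*$ (which can occur only at $|V(G)|=16$, since $|V(G_1^*)|=11$) is asserted, not proved: you would need to show that some other disjoint triangle yields $G'-\{u,v,w\}\ncong G_1^*$, and the paper avoids this entirely by handling the $G_1^*$ configuration as its own sub-case via $G_3$ and $S_3$ rather than by retrying.
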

\begin{proof} Since $G\in \mathscr{G}$, $G$ has at least two disjoint triangles. Let $G_1$ be the graph obtained from $G$ by contracting all the triangles of $G$ other than  $uvw$,  respectively. Let $S_1$ be the set of the contracted vertices of $G_1$.

We first claim that if $G_1$ has a triangle other than $uvw$, say $x_1y_1z_1$, that contains more than one vertices in $S_1$, then the lemma holds.

 Let $Y_1=\{x_1,y_1,z_1\}^{-1}_G$. We consider the following two cases.

Case 1.  $\{x_1,y_1,z_1\}\subseteq S_1$.

Since  $x_1y_1z_1$ is a triangle in $G_1$, $G[Y_1]\cong G^*_3$, see Figure \ref{fig:2}.  It can be checked that each bold edge in $G^*_3$ is removable by Lemma \ref{pro:easy}. Choosing an arbitrary edge from each triangle of $G^*_3$, respectively, we get a matching $M$. Since
$|M|=3>6/8+1=(|Y_1|-3)/8+1$ and $Y_1\cap\{u,v,w\}=\emptyset$, $Y_1$ and  $M$   satisfy the requirement of the lemma.

\begin{figure} [htbp]
\begin{center}

\includegraphics[width=10cm]{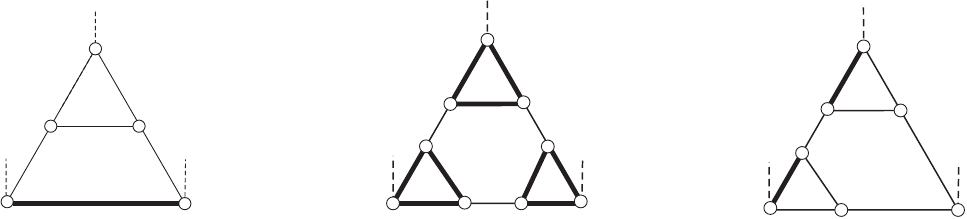}\\
$G^*_{2}$~~~~~~~~~~~~~~~~~~~~~~~~~~~$G^*_{3}$~~~~~~~~~~~~~~~~~~~~~~~~~~~~$G^*_{4}$
\end{center}
\caption{The bold edges represent the removable edges. }\label{fig:2}
\end{figure}

Case 2.  $|\{x_1,y_1,z_1\}\cap S_1|=2$.

In this case,   $G[Y_1]\cong G^*_{4}$ (Figure \ref{fig:2}). Let $M$ be the matching consisting of the two bold edges in $G^*_4$. Again by Lemma \ref{pro:easy}, every edge in $M$ is removable in $G$.  Since $2 >4/8+1=(|Y_1|-3)/8+1$, $Y_1$ and $M$ are as desired. Our claim follows.

Now we assume  that every triangle  of $G_1$, distinct from $uvw$,  contains exactly one vertex in $S_1$. This implies that $G[\{x,y,z\}^{-1}]\cong G_2^*$ for any triangle $xyz$ of $G_1$ other than $uvw$. Let $G_2$ be the graph obtained from $G_1$ by  contracting all the triangles   other than $uvw$ and let $S_2$ be the set of the contracted vertices of $G_2$. It is clear that $G_2\in \mathscr{G}$ and $S_2\cap S_1=\emptyset$. Further, every triangle in $G_2$ other than $uvw$ contains at least one vertex in $S_2$.
Recalling that $|V(G)|\geq 10$,  we have $|V(G_2)|\geq 6$.
So we can choose a triangle of $G_2$ other than $uvw$, say $ x_2y_2z_2$, and let $Y_2=(\{x_2,y_2,z_2\}^{-1}_{G_1})_G^{-1}$.

Case A.    $\{x_2,y_2,z_2\}\subseteq S_2$.

In this case,  $G[Y_2]\cong G^*_3\langle u',v',w'\rangle$, where  $u',v',w'$ are three vertices in different triangles of $G^*_3$, respectively.
Recall that $G^*_3$  has   9  removable edges. So by Corollary \ref{cor:ands}, $G^*_3\langle u',v',w'\rangle$ has a matching of size at least 6\ ($>12/8+1=(|Y_2|-3)/8+1$), every edge of which is removable.

Case B.    $|\{x_2,y_2,z_2\}\cap S_2|= 2$.

 If  $|\{x_2,y_2,z_2\}\cap S_1|=1$, then $G[Y_2]\cong G^*_3\langle u',v'\rangle$, where $u'$ and $v'$ are two vertices in different triangles of $G^*_3$, respectively. By Corollary \ref{cor:ands}, $G^*_3\langle u',v'\rangle$ has a matching of size 5\ ($>10/8+1=(|Y_2|-3)/8+1$), every edge of which is  removable. If $|\{x_2,y_2,z_2\}\cap S_1|=0$, then $G[Y_2]$ is isomorphic to  one of the graphs as listed in Figure \ref{fig:263}. Further, every graph in  Figure \ref{fig:263} has a matching of size 2 ($=8/8+1=(|Y_2|-3)/8+1$) (consisting of the  bold edges), each edge of which is  removable in $G$.
\begin{figure} [htbp]
\begin{center}
\includegraphics[width=8cm]{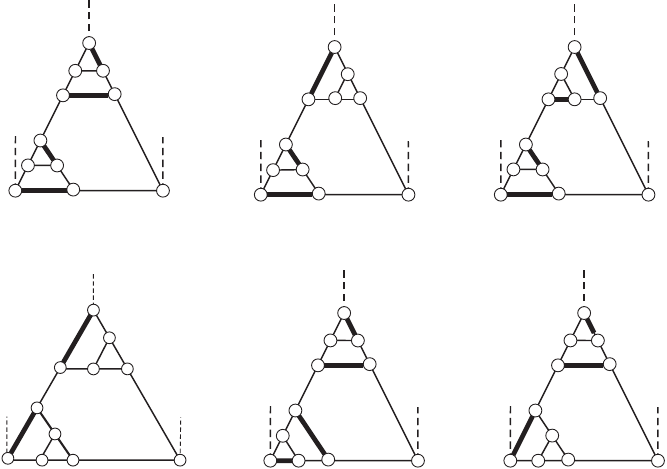}
\end{center}
\caption{ $G[Y_2]$ when $|\{x_2,y_2,z_2\}\cap S_2|= 2$ and $|\{x_2,y_2,z_2\}\cap S_1|=0$.   }\label{fig:263}
\end{figure}

Case C.  $|\{x_2,y_2,z_2\}\cap S_2|= 1$.

 Without loss of generality, assume that $z_2\in S_2$. Let $r_{\max}$ be the maximum size of a matching consisting only of removable edges in $G[Y_2]$. We list the lower bounds of $r_{\max}$ and the corresponding graph $G[Y_2]$ by distinguishing the three cases, i.e.,  $|\{x_2,y_2\}\cap S_1|=0,1,2$, as the following table. The table shows that $G[Y_2]$ has a matching of size at least $(|Y_2|-3)/8+1$, each edge of which is removable in $G$.\end{proof}

\renewcommand{\arraystretch}{1.6}
\begin{table}[h]\label{Ta1}
 \fontsize{10}{8}\selectfont
\caption{The lower bounds of $r_{\max}$ and the subgraph $G[Y_2]$ when $|\{x_2,y_2,z_2\}\cap S_2|= 1$.}
\begin{center}
\begin{tabular}{|c|c|c|c|} 
\hline
Cases & $|Y_2|$ & $r_{\max}$ & $G[Y_2]$\\ 
\hline 
$\{x_2,y_2\}\cap S_1=\emptyset$   & 7&$\geq$ 2 & Figure \ref{fig:264}
\\ \hline
 $|\{x_2,y_2\}\cap S_1|=1$ &9 &  $\geq$ 2 & Figure \ref{fig:265} \\
\hline
 $|\{x_2,y_2\}\cap S_1|=2$ &11 & $\geq$ 4  & $G^*_3\langle v\rangle, v\in V(G^*_3)$ \\
\hline
\end{tabular}
\end{center}
\end{table}
\begin{figure} [htbp]
\centering
\includegraphics[width=6cm]{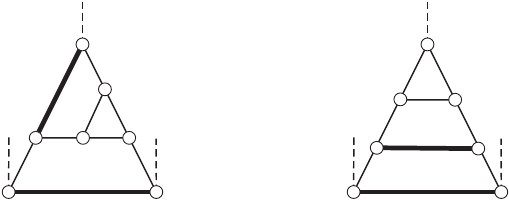}
\caption{ $G[Y_2]$ when $|\{x_2,y_2,z_2\}\cap S_2|= 1$ and  $\{x_2,y_2\}\cap S_1=\emptyset$. }\label{fig:264}
\end{figure}
\begin{figure} [htbp]
\centering
\includegraphics[width=8cm]{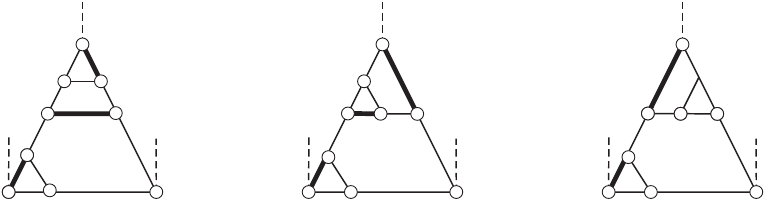}
\caption{ $G[Y_2]$ when $|\{x_2,y_2,z_2\}\cap S_2|= 1$ and  $|\{x_2,y_2\}\cap S_1|=1$.}\label{fig:265}
\end{figure}

We say that
two edge cuts $\partial(X)$ and $\partial(Y)$  {\em cross} if the four sets ${X} \cap Y$, $\overline{X} \cap Y$, ${X} \cap \overline{Y}$ and $\overline{X} \cap \overline{Y}$ are all nonempty.
\begin{lem}\label{lem:brace2}Let $G_0[A,B]$ be a cubic brace,  $u\in A$ and
$v\in B$.  And let $G_1$ and $G_2$ are two cubic bricks, $G=(G_0(u)\odot G_1)(v)\odot G_2$. Then $G$ is a brick.
 \end{lem}
\begin{proof}  Obviously, $G$ is 3-connected cubic matching covered graph.
Suppose, to the contrary, that $\partial (X)$ is a tight cut of $G$.
By Theorem 8 in \cite{KCLL19a}, $\partial (X)$ is a 3-cut. Let $Y=V(G)\cap V(G_1)$. Then $\partial (Y)$ is a 3-cut of $G$. We claim that $\partial (X)$   and $\partial (Y)$ do not cross.

 Let $\partial(X)=\{x_i\overline{x_i}:i=1,2,3\}$,
 $\partial(Y)=\{y_i\overline{y_i}:i=1,2,3\}$ such that $x_i\in X$, $y_i\in Y$ for $i=1,2,3$. We claim that
 $G[X]$ is connected. Otherwise, suppose $G_1$ is one of the  components of $G[X]$. Then one of $G_1$ and $G[X\setminus V(G_1)]$, say $G_1$, contains at most one vertex in $\{x_1,x_2,x_3\}$ by pigeonhole principle. Since $G$ is connected, at least one vertex in $\{x_1,x_2,x_3\}$ lie in $G_1$. So $|\{ x_1,x_2,x_3 \}\cap V(G_1)|=1$.
  Without loss of generality, assume that $x_1\in V(G_1)$.
  Then $G- \overline{x_1}$ is not connected, contradicting  the fact that $G$ is 3-connected. Similarly, $G[\overline{X}]$, $G[Y]$ and $G[\overline{Y}]$ are connected.

Suppose to the contrary that $\partial(X)$ and $\partial(Y)$  cross. We will show that $G$ is not 3-connected, which derives a contradiction.
Recalling  $G[\overline{X}]$ and $G[X]$ are connected, and $X\cap Y$ and
$X\cap \overline{Y}$ are the partition of $X$, we have $\{ x_1,x_2,x_3 \}\cap Y\neq \emptyset$ and $\{ x_1,x_2,x_3 \}\cap \overline{Y}\neq \emptyset$.
 Without loss of generality, assume that $\{x_1, x_3\}\subset X\cap Y$ and ${x_2}\in X\cap \overline{Y}$. Similarly, assume that
  ${y_1}\in X\cap Y$,
  $\overline{y_1}\in X\cap \overline{Y}$, $\{\overline{x_1}, {y_2}\}\subset \overline{X}\cap Y$ and $\{\overline{x_2}, \overline{y_2}\}\subset \overline{X}\cap \overline{Y}$.
If $\overline{x_3}\in \overline{X}\cap \overline{Y}$, then $x_3\overline{x_3}=y_3\overline{y_3}$. Therefore, $G[ X\cap \overline{Y}]$ is a component of $G-\{y_1,\overline{x_2}\}$.
Now we assume that $\overline{x_3}\in \overline{X}\cap {Y}$. Then  $G[ {X}\cap \overline{Y}]$ is a component of $G-\{y_1,\overline{x_2}\}$ when $\{\overline{y_3}, {y_3}\}\subset \overline{X}$;  $G[ \overline{X}\cap \overline{Y}]$ is a component of $G-\{x_2, y_2\}$ when $\{\overline{y_3}, {y_3}\}\subset {X}$. So the claim follows.

 Similarly, $\partial (X)$ and $\partial (Z)$ do not cross, where $Z=V(G)\cap V(G_2)$.
 Therefore, $\partial (X)$ is  a subset of $E(G_0)$, $E(G_1)$ or $E(G_2)$. Suppose, without loss of generality, that $\partial (X)\subset E(G_0)$. Then $\partial (X)$ is a tight cut of $G_0$, contradicting the fact that $G_0$ is a brace.
Therefore, $G$ is a brick.
\end{proof}

 For a graph $G\notin  \mathscr{G}$, we say $X$ is the {\em minority} of a 3-cut $\partial (X)$ if   $G/\overline X\in  \mathscr{G}$.
A non-trivial 3-cut $\partial (X)$   is {\em maximal} if there does not exist any non-trivial 3-cut $ \partial (Y)$ such that $G/\overline Y\in  \mathscr{G}$ and $X\subset Y$, where $X$ is the { minority} of $\partial (X)$.

\begin{lem}\label{lem:nnb2}Let $G$ be a cubic brick  and $G\notin\mathscr{G}$. If for  every maximal 3-cut $\partial (X)$ of $G$, $|X|\leq 5$, where $X$ is the {minority} of $\partial (X)$, or $G$ has a  non-trivial 3-cut $C$ such that one of the $C$-contractions  is essentially 4-edge-connected other than $K_4$, then
 $G$ has a matching $M$ consisting of the removable edges of $G$ such that\\
   {\em (i).}  $|M|\geq |V(G)|/6$ or,\\
    {\em (ii).}
    $M\subset E( G[Y] )$ and $|M|\geq (|Y|-3)/8+1$, where $\partial (Y)$ is a non-trivial 3-cut of $G$ such that $|Y|\geq 5$.
 \end{lem}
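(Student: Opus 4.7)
My plan is to split on the two alternative hypotheses.

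Suppose first that $G$ admits a non-trivial 3-cut $C=\partial(X)$ such that $H:=G/\overline{X}$ is essentially 4-edge-connected and $H\neq K_4$. Since $H$ is cubic, $|V(H)|$ is even and thus at least $6$, so $|X|=|V(H)|-1\geq 5$. Corollary \ref{cor:4ecr} supplies a perfect matching $M_H$ of $H$ whose edges are all removable in $H$; at most one edge of $M_H$ is incident with the contracted vertex $x$, so $M_H\cap E(G[X])$ is a matching in $G[X]$ of size at least $|V(H)|/2-1=(|X|-1)/2$. Proposition \ref{pro:spl}(i) lifts each such edge to a removable edge of $H\langle x\rangle=G^{\Delta}(X)$, and Proposition \ref{pro:e2} then lifts it further to a removable edge of $G$. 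A direct check shows $(|X|-1)/2\geq (|X|-3)/7+1$ whenever $|X|\geq 3$, so conclusion~(ii) of the lemma holds with $Y=X$.

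The remaining case is when every maximal 3-cut $\partial(X)$ of $G$ has minority $X$ with $|X|\leq 5$ or $G[X]\cong G_1^*$. If $G$ has no non-trivial 3-cut at all, then $G$ is essentially 4-edge-connected and, since $G\notin\mathscr{G}$, $G\neq K_4$; Corollary \ref{cor:4ecr} gives a perfect matching of $|V(G)|/2\geq |V(G)|/7$ removable edges of $G$, yielding~(i). Otherwise let $\partial(X_1),\ldots,\partial(X_t)$ be the maximal 3-cuts and first establish that one may choose them so that the minorities $X_1,\ldots,X_t$ are pairwise disjoint, using an uncrossing argument based on brickness and maximality. Form $G^*$ by contracting each $X_i$ to a single vertex $x_i$. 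The plan is then to argue that maximality, together with $G\notin\mathscr{G}$, forces $G^*$ to be essentially 4-edge-connected and not $K_4$. Applying Corollary \ref{cor:4ecr} to $G^*$ and lifting via Propositions \ref{pro:spl} and \ref{pro:e2} produces a matching of removable edges of $G$ of size at least $|V(G^*)|/2$ disjoint from $\bigcup_i X_i$. Inside each minority I add a local matching of removable edges of $G$: Proposition \ref{pro:ne}(i) gives at least two removable edges when $G[X_i]\cong G_1^*$ (a $2/11$-fraction of $|X_i|$), Corollary \ref{cor:ands} gives at least one when $|X_i|=3$ (a $1/3$-fraction), and a short direct case analysis treats $|X_i|=5$ with a ratio at least $1/5$. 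Since every ratio exceeds $1/7$, summing yields a matching of $G$ of size at least $|V(G)|/7$, which is conclusion~(i).

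The principal obstacle is Case~2: verifying that the contracted graph $G^*$ is essentially 4-edge-connected requires a careful use of the maximality hypothesis together with the bicritical structure of the brick $G$, and the matching lifted from $G^*$ must be combined with the local matchings inside each $X_i$ so as to form a single matching of $G$---which forces each local matching to avoid the (at most three) vertices of $X_i$ incident with the splicing edges into $G^*$. The ratio $1/7$ is essentially tight because $G_1^*$ delivers only a $2/11$-fraction, the worst among the allowed pieces; the alternative conclusion~(ii) coming from Case~1 is what handles the situation in which a single large minority dominates $|V(G)|$.
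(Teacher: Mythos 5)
Your Case 1 (a $C$-contraction is essentially 4-edge-connected and not $K_4$) matches the paper: a perfect matching of removable edges of $H=G/\overline X$ from Corollary~\ref{cor:4ecr}, drop the edge at the contracted vertex, lift via $G^\Delta(X)$ using Propositions~\ref{pro:spl} and~\ref{pro:e2}, and verify $(|X|-1)/2\ge(|X|-3)/7+1$, giving conclusion~(ii).

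The gap is in Case 2, at the sentence ``The plan is then to argue that maximality, together with $G\notin\mathscr{G}$, forces $G^*$ to be essentially 4-edge-connected.'' This is false in general. A \emph{maximal 3-cut} in the paper's sense is one whose minority $X$ satisfies $G/\overline X\in\mathscr{G}$. But $G$ may also possess non-trivial 3-cuts $\partial(Z)$ for which \emph{neither} contraction lies in $\mathscr{G}$; such cuts are not maximal 3-cuts by definition, they survive the contraction of all minorities, and thus $G^*$ (the paper's $G'$) can perfectly well retain non-trivial 3-cuts. The paper explicitly splits this situation into two sub-cases depending on whether $G'$ is essentially 4-edge-connected, and its Case~1 (when $G'$ is \emph{not}) is the bulk of the proof: it picks an innermost non-trivial 3-cut $\partial(X_1)$ of $G'$ so that $H=G'/\overline{X_1}$ is essentially 4-edge-connected and not $K_4$, works inside the corresponding set $X_2\subset V(G)$, and only reaches conclusion~(ii) (with $Y=X_2$), not~(i). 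Your proposal produces only~(i) in this branch, which does not cover the actual outcome.

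There is also a quantitative issue your per-piece ratio argument glosses over: the perfect matching of removable edges that Corollary~\ref{cor:4ecr} supplies in the contracted graph may contain edges incident with contracted vertices, and those cannot simply be lifted — they must be discarded, so the contribution from $G^*$ is at most $|V(G^*)|/2 - |S\cup\{x\}|$, not $|V(G^*)|/2$. Worse, Theorem~\ref{thm:4ecr} only guarantees that each edge of $H$ is removable \emph{or} in a removable doubleton, and removable-doubleton edges may not be removable in $G$ even after lifting; the paper tracks the set $T$ of vertices of $H$ meeting a non-removable edge and uses Proposition~\ref{pro:con}(i) and the Claims to control $|M_u|$ accordingly. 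Your uniform fractions ($2/11$, $1/5$, $1/3$ per piece plus $1/2$ from $G^*$) are not justified once these corrections are taken into account, which is why the paper carries out the explicit ratio estimates distinguishing the sub-cases by $|S\cap T|$.
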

\begin{proof} By
 Corollary \ref{cor:4ecr}, $G$ has a perfect matching consisting of removable edges when $G$ does not  have non-trivial 3-cuts. Then
 we may assume that $G$ has a non-trivial 3-cut.
Assume first that
 $G$ has a  non-trivial 3-cut $\partial({X}) $ such that $G/\overline{X}\rightarrow \overline{x}$ is essentially 4-edge-connected distinct from $K_4$.
By Corollary \ref{cor:4ecr}, $G/\overline{X}$ has a perfect matching $M_1$ such that every edge of $M_1$ is removable in $G/\overline{X}$.
Assume that $\overline{x}u \in M_1$.
By Lemma \ref{pro:spl}, every edge in $M_1\setminus\{ \overline{x}u \}$ is removable in $G$. Since $|M_1\setminus\{ \overline{x}u \}|={|V(G/\overline{X})|}/{2}-1=(|X|-1)/{2} >{(|X|-3)}/{8}+1$
(note that $|V(G/\overline{X})|\geq 6$), the result holds in this case.

 Now we assume that, for every  non-trivial 3-cut $C$,  the $C$-contractions are $K_4$,  or  not essentially 4-edge-connected. Since $G\notin\mathscr{G}$,   in every non-trivial 3-cut $C$, at most one of the $C$-contractions belongs to  $\mathscr{G}$.
 By hypothesis,
 every maximal 3-cut $\partial (X)$ of $G$ satisfies $|X|\leq 5$, where $X$ is the {minority} of $\partial (X)$.
 Contracting the minorities of
 all the maximal 3-cuts  of  $G$   to single vertices, respectively, we get $G'$.
 Now we consider two cases depending on whether $G'$ is essentially 4-edge-connected.

 Case. 1.  $G'$ is not essentially 4-edge-connected.

Choose a non-trivial 3-cut $\partial (X_1)$ of $G'$ such that every 3-cut $\partial (X_1')$ satisfying $X_1'\subset X_1$ is trivial.
Then $H=G'/ \overline{X}_1\rightarrow x $ is essentially 4-edge-connected other than $K_4$. Moreover, $H\notin\mathscr{G}$. Let $G_1^*$ be a triangle, and   $S_i=\{u\in X_1| G[\{u\}_G^{-1}]\cong G_i^*  \}$ for $i=1,2$,
  and let $S=S_1\cup S_2$, $s_i=|S_i|$.
  Let $X_2=X_1\cup S_G^{-1}\setminus S$.
  Then $X_2\subset V(G)$, $\partial (X_2)$ is also a 3-cut of $G$ and
   $|X_2|=|V(H)|+2s_1+4s_2-1$.
  For $u\in S$, let $Y_u=\{u\}_G^{-1}$. Let $M_u$ be a maximum matching of $G[Y_u]$ such that  every edge in it is removable in $G$.

  If $H$ is not bipartite, then it is a brick by Theorem \ref{thm:4ecb}. Assume that $u\in S$, $G_u= G/\overline{Y_u}\rightarrow u'$ and $H'=H(u)\odot G_u(u')$. Then $H'$ is a brick by Proposition \ref{lem:3-sp}. So $\partial ( Y_u )$ is good in $H'$.  By Corollary \ref{cor:ands},
 the subgraph $H'[Y_u]$  contains one removable edges of $H'$ when $u\in S_1$. By Corollary \ref{cor:spl2} repeatedly,
   the subgraph $G[Y_u]$  contains one removable edge of $G$ when $u\in S_1$.
    Similarly, $ |M_u|>1$ when $u\in S_2$, as $G_2^*$ contains one removable edge (see Figure \ref{fig:2}).

 If $H$ is bipartite, then it is a brace by Theorem \ref{thm:4ecb} again.
  Each color class of $H$ contains some vertex in $S\cup \{x\}$. Otherwise, the color class that contains no vertex in $S\cup \{x\}$ is a barrier of $G$, which contradicts  the fact that $G$ is a brick.
 Assume that $v\in S$, $u$ and $v$ lie in different color classes of $H$ and $H''=(H(u)\odot G_u(u'))(v)\odot G_v(v')$, where $G_v= G/\overline{Y_v}\rightarrow v'$. Then $H''$ is a brick by Lemma \ref{lem:brace2}.  So $\partial ( Y_u )$ and $\partial ( Y_v )$ are good in $H''$. With the same reason as  last paragraph, we have $ |M_u|>1$ when $u\in S$.

By Corollary \ref{cor:4ecr}, $H$ has a perfect matching $M_2$ consisting of removable edges in $H$.
If $e\in M_2$ and $e$ is  incident with some vertex in $S\cup \{x\} $, then $e$ may not be removable in $G$. Let $M_2' \subset M_2$ such that every edge in $M_2'$ is not incident with any vertex in $S\cup \{x\}$ (When $|S\cup \{x\}|\geq{|V(H)|}/{2}$,  $M_2' $ may be empty). Let $M=(\bigcup_{u\in S} M_u)\cup M_2'$.
Then every edge in $M$ is removable in $G$ by Lemma \ref{pro:spl} and Corollary \ref{cor:spl2}. Moreover, $M$ is a matching.

 We now estimate the lower bound of $|M|$.  Firstly, assume that $ |V(H)|\geq 8$.
We consider the case when $|S\cup \{x\}|\geq |V(H)|/2$. Note that every vertex in $H$ is incident with one edge in $M_2$. So $ \sum_{u\in S} |M_u|\geq  s_1+s_2. $
Then
$\frac{ |M|-1}{|X_2|-3}\geq
\frac{s_1+s_2-1 }{ |V(H)|-4+2s_1+4s_2 }$. If $\frac{s_1+s_2-1 }{ |V(H)|-4+2s_1+4s_2 }\geq\frac{1 }{ 2 }$, then the result follows. So we assume that   $\frac{s_1+s_2-1 }{ |V(H)|-4+2s_1+4s_2 }<\frac{1 }{ 2 }$. Then
$\frac{s_1+s_2-1 }{ |V(H)|-4+2s_1+4s_2 }\geq
\frac{s_2-1 }{ |V(H)|-4+4s_2 }\geq
\frac{|V(H)|/2-2}{ |V(H)|-4+4(|V(H)|/2-1) }\geq\frac{|V(H)|/2-2 }{3|V(H)|-8}\geq\frac{2}{16}=\frac{1}{8}~(\mbox{as}~s_2\geq |V(H)|/2-1~\mbox{and}~ |V(H)|\geq 8).
$ So $|M|\geq (|X_2|-3)/8+1$. The result follows.

We consider the case when $|S\cup \{x\}|\leq |V(H)|/2-1$.
Recalling  $|M_2'|\geq |V(H)|/2-1- s_1-s_2$ and $|M|=|M_2'|+\sum_{u\in S} |M_u|\geq |V(H)|/2-1$, we have
$$\hspace{-3cm}
\frac{ |M|-1}{|X_2|-3}\geq
\frac{|V(H)|/2-2 }{ |V(H)|-4+2s_1+4s_2 }\geq
\frac{|V(H)|/2-2 }{ |V(H)|-4+4s_2 }\geq
\frac{|V(H)|/2-2 }{ |V(H)|-4+4(|V(H)|/2-2) }$$
$$\hspace{-3.5cm}\geq\frac{|V(H)|/2-2 }{3|V(H)|-12}\geq\frac{1}{6}>\frac{1}{8}~(\mbox{as}~s_2\leq |V(H)|/2-2~\mbox{and}~ |V(H)|\geq 8).
$$

Now we assume that $ |V(H)|=6$. Noting that $\overline{C_6}$ is the only cubic brick with 6 vertices, which is not \e, we have $H$ is bipartite, that is $H \cong K_{3,3}$.
 Note that every edge is removable in $H$ by
Theorem \ref{thm:br}. It can be checked that the subgraph $G[\{u\}_G^{-1}]$  contains two non-adjacent removable edges of $G$ when $u\in S_2$ (see Figure \ref{fig:re}(a), in the figure, only the case when the two color classes of $H$   contain  one vertex in $S$ resp. is illustrated, the other cases are similar by Lemma \ref{pro:spl} and Corollary \ref{cor:spl2}).
Then we have\\
$
|M|\geq
\begin{cases} s_1+2s_2+3-s_1-s_2,\quad \ \ &
  |S\cup\{x\}|\leq 2\\
 s_1+2s_2, \quad \ \ & |S\cup\{x\}|\geq 3
\end{cases},$ and $|X_2|= 5+2s_1+4s_2$. Therefore,
\[\hspace{-3.5cm}
\frac{ |M|-1}{|X_2|-3}\geq
\begin{cases}\frac{s_2+2}{ 2+2s_1+4s_2}\geq \frac{2}{ 2+2s_1} \geq \frac{1}{ 2},\quad \ \ &
  |S\cup\{x\}|\leq 2\\
 \frac{s_1+2s_2-1}{2+2s_1+4s_2}\geq \frac{ s_1-1}{ 2s_1+2}\geq  \frac{1}{ 6}, \quad \ \ & |S\cup\{x\}|\geq 3
\end{cases}. \mbox{ The result follows in this case.}
\]

\begin{figure} [htbp]
\begin{center}

\includegraphics[width=9cm]{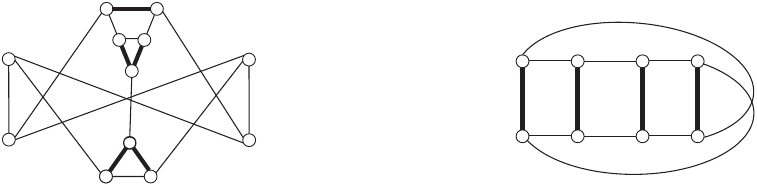}\\ ~~~~~~~~~~~~~~~~~~~~(a)~~~~~~~~~~~~~~~~~~~~~~~~~~~~~~~~~~~~~~~~~~~~(b)~~~~~~~~~~~~~~~~~~~~~
\end{center}
\caption{The bold edges represent the removable edges. }\label{fig:re}
\end{figure}

Case. 2.  $G'$ is   essentially 4-edge-connected.

Similar to Case 1, for $i=1,2$, let
 $S_i=\{u\in S| G[\{u\}_G^{-1}]\cong G_i^*\}$, and $s_i=|S_i|$. Let $S=S_1\cup S_2$, that is $S=V(G')\setminus V(G)$.
   Then
$|V(G)|=|V(G')|+2s_1+4s_2$. Recall that
 $M_u$ is a maximum matching of $G[\{u\}_G^{-1}]$ such that  every edge in it is removable in $G$ for $u\in S$.  Then $\sum_{u\in S} |M_u|\geq  s_1+s_2. $
By Corollary \ref{cor:4ecr}, $G'$ has a perfect matching $M_3$ consisting of removable edges in $G'$. Let $M_3' \subset M_3$ such that every edge in $M_3'$ is not incident with any vertex in $S$. Let $M= M_3'\cup(\cup_{u\in S} M_u)$.

Note that $|V(G')|\geq 6$.
If $s_1+s_2\leq  {|V(G')|}/{2}$, then
 $|M|\geq s_1+s_2+|V(G')|/2- (s_1+s_2)=|V(G')|/2$, therefore
 $ \frac{|M|}{|V(G)|}=\frac{|V(G')|/2}{|V(G')|+2s_1+4s_2}\geq \frac{|V(G')|/2}{|V(G')|+4s_2}\geq \frac{1}{6}~~(\mbox{as~} 1\leq s_2\leq |V(G')|/2).$
If $s_1+s_2>  {|V(G')|}/{2}$, then $|M|\geq s_1+s_2 $, so  $\frac{|M|}{|V(G)|}=\frac{s_1+s_2}{|V(G')|+2s_1+4s_2} \geq \frac{s_2}{|V(G')|+4s_2}\geq \frac{|V(G')|/2+1}{|V(G')|+4(|V(G')|/2+1)}\geq \frac{2}{11}~~(\mbox{as~} |V(G')|\geq s_2\geq |V(G')|/2+1).$
Thus the result follows.
\end{proof}

\begin{lem}\label{lem:1}Let $\partial (X)$ be a non-trivial 3-cut of a cubic brick $G$ and $|X|\geq 5$.
If $G$ has a matching $M$ of size at least $ (|X|-3)/8+1$ such that
$M\subset E(G[X])$ and every edge in $M$ is removable in $G$, then $G$ has a matching $M'$ of size at least  $|V(G)|/8$ such that   every edge of $M'$ is removable in $G$.
\end{lem}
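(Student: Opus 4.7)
\medskip\noindent
\textbf{Proof plan.} My strategy is to extend the given removable matching $M\subset E(G[X])$ by a companion matching of removable edges inside $G[\overline{X}]$. The key reduction is the $\Delta$-replacement $H=G^{\Delta}(\overline{X})=(G/X\to x)\langle x\rangle$: I invoke Theorem~\ref{thm:main} on the smaller cubic brick $H$ and transfer the resulting removable matching back to $G$ via Proposition~\ref{pro:e2}. The argument runs by induction on $|V(G)|$, with Theorem~\ref{thm:main} and Lemma~\ref{lem:1} proved jointly.

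Since $G$ is cubic, the degree sum in $X$ gives $3|X|=2|E(G[X])|+3$, so $|X|$ (and hence $|\overline{X}|$) is odd. If $|\overline{X}|=3$, then $|V(G)|=|X|+3$ and the hypothesis already yields $|M|\geq (|X|+4)/7\geq |V(G)|/7$, so $M'=M$ suffices.

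Now assume $|\overline{X}|\geq 5$. By Proposition~\ref{pro:e} (applied with $X$ and $\overline{X}$ interchanged, both of size $\geq 5$), $H$ is a cubic brick; moreover $|V(H)|=|\overline{X}|+3\geq 8$ and $|V(H)|<|V(G)|$, so $H\notin\{K_4,\overline{C_6}\}$. Setting aside the case $H=R_8$ (which forces $|\overline{X}|=5$), Theorem~\ref{thm:main} applied inductively to $H$ yields a matching $M_1$ of removable edges of $H$ with $|M_1|\geq |V(H)|/7=(|\overline{X}|+3)/7$. Proposition~\ref{pro:e3} guarantees that the three splicing edges joining the inserted triangle to the rest of $H$ are not removable in $H$, so every edge of $M_1$ lies either on the inserted triangle (at most one, since triangle edges are mutually adjacent) or in $E(G[\overline{X}])$. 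Hence
\[
\bigl|M_1\cap E(G[\overline{X}])\bigr|\;\geq\;\frac{|\overline{X}|+3}{7}-1\;=\;\frac{|\overline{X}|-4}{7},
\]
and Proposition~\ref{pro:e2} lifts each of these edges to a removable edge of $G$. Taking $M'=M\cup\bigl(M_1\cap E(G[\overline{X}])\bigr)$, which is a matching because $X$ and $\overline{X}$ are disjoint, yields $|M'|\geq (|X|-3)/7+1+(|\overline{X}|-4)/7=|V(G)|/7$, as required.

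The main obstacle is the exceptional sub-case $H=R_8$, where Theorem~\ref{thm:main} is unavailable. By Proposition~\ref{thm:3re} together with the Carvalho--Lucchesi--Murty $\Delta-2$ bound, $R_8$ has only a single removable edge, so the edge count above collapses. Since $|\overline{X}|=5$ and hence $|V(G)|=|X|+5$, comparing $\lceil(|X|+4)/7\rceil$ with $\lceil|V(G)|/7\rceil$ shows that an additional removable edge in $G[\overline{X}]$ is needed only in the residue class $|X|\equiv 3\pmod 7$. In that class, I would close the gap by inspecting the explicit structure of $R_8$ together with the three possible positions of the replacement-triangle inside it, verifying that the unique removable edge of $R_8$ always lies in $E(G[\overline{X}])$ (so that Proposition~\ref{pro:e2} supplies the extra removable edge of $G$).
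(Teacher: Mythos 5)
Your approach coincides with the paper's proof of Lemma~\ref{lem:1} step for step: induct on $|V(G)|$; dispose of $|\overline{X}|=3$ directly; for $|\overline{X}|\geq 5$ form the $\Delta$-replacement $G^{\Delta}(\overline{X})$, which is a smaller brick by Proposition~\ref{pro:e}; apply Theorem~\ref{thm:main} to it; discard the (non-removable) splicing edges via Proposition~\ref{pro:e3} and at most one triangle edge; lift what remains into $G$ by Proposition~\ref{pro:e2}; and add to $M$. The arithmetic $(|X|-3)/7+1+(|\overline{X}|+3)/7-1=|V(G)|/7$ is the paper's. You are also right that this is really a mutual induction with Theorem~\ref{thm:main} (the paper just writes ``by the induction hypothesis'' without elaborating), and you make a genuinely good observation that the paper omits: Theorem~\ref{thm:main} excludes $R_8$, yet $G^{\Delta}(\overline{X})\cong R_8$ is possible (take $|\overline{X}|=5$ with $G/X\cong\overline{C_6}$), so this case needs separate handling. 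You also correctly isolate $|X|\equiv 3\pmod 7$ as the only residue class where the collapsed count is actually short.

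Two gaps remain in your write-up. First, you never state a base case; without one the mutual induction with Theorem~\ref{thm:main} cannot start. The paper anchors it at $|V(G)|\leq 16$ via Proposition~\ref{thm:3re}, remarking that $K_4$, $\overline{C_6}$ and $R_8$ never satisfy the hypothesis of the lemma (for $R_8$ the 3-cut exists but the required matching does not, since $R_8$ has a single removable edge); some such step is needed in a standalone proof. Second, your resolution of the $R_8$ case is only a promise (``I would close the gap by inspecting\ldots'') rather than an argument, and the phrase ``three possible positions of the replacement-triangle'' overcounts: $R_8$ has exactly two triangles, interchanged by an automorphism, so up to symmetry there is only one position to check; in it, the unique removable edge of $R_8$ (the edge $u_1v_1$ in the staircase notation of Proposition~\ref{pro:3con}) avoids both triangles, hence does lie in $E(G[\overline{X}])$ and Proposition~\ref{pro:e2} supplies the needed extra removable edge of $G$. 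With those two points filled in your argument is complete and, indeed, slightly more careful than the paper's.
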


\begin{proof}
We prove the result by induction on $|V(G)|$. As $|X|\geq 5$, $|M|\geq 2$. So the result holds when $|V(G)|\leq 16$ (note that $K_4$, $\overline{C_6}$ and $R_8$ do not have 3-cuts satisfying the hypothesis).
Assume that the result holds  when $ |V(G)|< n$ $(n> 16)$. We now assume that $|V(G)|=n$. Since $\partial (X)$ is non-trivial, $|\overline{X}|\geq 3$.
We assume first that $|\overline{X}|\leq 5$.  Then $n\leq |X|+5$.  Since $ |M|\geq (|X|-3)/8+1=(|X|+5)/8\geq n/8$, the assertion holds.
So we assume that $|\overline{X}|\geq 7$.
By Proposition \ref{pro:e}, $G^{\Delta}(\overline{X})$ is a brick.
Recalling $|X|\geq 5$, $10\leq |G^{\Delta}(\overline{X})|=|\overline{X}|+3<n$.
 By the induction hypothesis,
$G^{\Delta}(\overline{X})$ has a matching $M_1$ of size at least  $(|\overline{X}|+3)/8$ such that
 every edge of $M_1$ is removable in $G^{\Delta}(\overline{X})$. By Proposition \ref{pro:e3}, every edge in $\partial_{G^{\Delta}(\overline{X})}( \overline{X})$ is not removable.
Then
 $|M_1\cap  G[\overline{X}]|\geq (|\overline{X}|+3)/8-1$ ($-1$  comes from the fact that  at most one edge in the replacement-triangle of $G^{\Delta}(\overline{X})$ lies in $M_1$). Let $M'=M\cup (M_1\cap  G[\overline{X}])$.
Since $M\cap M_1=\emptyset$,  $ |M'|\geq(|X|-3)/8+1+(|\overline{X}|+3)/8-1
=n/8$, the lemma follows by Proposition \ref{pro:e2}.
\end{proof}

{\it The proof of Theorem \ref{thm:main}:}
 Note that $K_4$ is the only cubic brick with 4 vertices, $\overline{C_6}$ is the only cubic brick with 6 vertices, and $R_8$  and the m\"obius ladder with 8 vertices (see Fig. \ref{fig:re}(b), with 4 removable edges)  are all the  cubic bricks with 8 vertices.
 We may assume that $|V(G)|\geq 10$.
By  Lemmas  \ref{lem:nnb1} and \ref{lem:1}, the result holds when $G\in \mathscr{G}$. So we consider the case when $G\notin \mathscr{G}$.
 Assume that $G$ has  a
 3-cut $\partial (X)$  satisfying $|X|\geq 7$, $G/\overline X\in \mathscr{G}$, where $X$ is the minority of $\partial (X)$.
   Then  $G^{\Delta}(X)\in \mathscr{G}$.
  Therefore, $|G^{\Delta}(X)|\geq 10$ and $G^{\Delta}(X)$  has a  non-trivial 3-cut $\partial({Y})$ such that $|Y|\geq 5$, $Y\subset X$ and $G^{\Delta}(X)[Y]$
has a matching of size at least $ (|Y|-3)/8+1$, each edge of which is removable by Lemma \ref{lem:nnb1}. Since $Y\subset V(G)$ and $\partial({Y})$ is also a 3-cut of $G$,
$G^{\Delta}(X)[Y]  \cong G[Y] $.  Thus the result holds by Lemma \ref{lem:1}. Now we   assume that every maximal 3-cut $\partial (X)$  (if exists) satisfies $|X|\leq 5$, or $G$ does not have 3-cut $\partial (X)$ such that $G/\overline X\in \mathscr{G}$. Note that if, for every 3-cut $C$ of $G$, the $C$-contractions are not in
 $\mathscr{G}$, then  $G$ has a  non-trivial 3-cut $C'$ such that one of the $C'$-contractions  is essentially 4-edge-connected other than $K_4$.
By  Lemma  \ref{lem:nnb2} and  Lemma \ref{lem:1}, the theorem follows.
$\hfill\square$

It should be noted that the lower bound in Theorem \ref{thm:main} is not tight for large $|V(G)|$. We do not know the attainable lower bound of independent removable edges of cubic bricks with any number of  vertices. 

\bibliographystyle{plain}
\bibliography{references}

\begin{thebibliography}{10}

\bibitem{CL14}
M.~H.~Carvalho \ and C.~H.~C. Little.
\newblock Matching covered graphs with three removable classes.
\newblock {\em The Electronic J. of Combin.}, 21(2):$\sharp$ P2.13, 2014.

\bibitem{CLM9914}
M.~H. Carvalho, C.~L. Lucchesi\, and U.~S.~R. Murty.
\newblock Ear decompositions of matching covered graphs.
\newblock {\em Combinatorica}, 19(2):151--174, 1999.

\bibitem{CLM051}
M.~H. Carvalho, C.~L. Lucchesi\, and U.~S.~R. Murty.
\newblock Graphs with independent perfect matchings.
\newblock {\em Journal of Graph Theory}, 48:19--50, 2005.

\bibitem{CLM15}
M.~H. Carvalho, C.~L. Lucchesi\, and U.~S.~R. Murty.
\newblock Thin edges in braces.
\newblock {\em The Electronic J. of Combin.}, 22(4):$\sharp$ P4.14, 2015.

\bibitem{ELP8274}
J.~Edmonds, L.~Lov{\'a}sz\, and W.~R. Pulleyblank.
\newblock Brick decompositions and the matching rank of graphs.
\newblock {\em Combinatorica}, 2(3):247--274, 1982.

\bibitem{KCLL19a}
N.~Kothari, M.~H. Carvalho, C.~L.~Lucchesi \, and C.~H.~C. Little.
\newblock On essentially 4-edge-connected cubic bricks.
\newblock {\em The Electronic J. of Combin.}, 27(1):$\sharp$ P1.22, 2020.

\bibitem{lo}
L.~Lov{\'a}sz.
\newblock Ear decompositions of matching covered graphs.
\newblock {\em Combinatorica}, 2:105--117, 1983.

\bibitem{L87}
L.~Lov\'asz.
\newblock Matching structure and the matching lattice.
\newblock {\em Journal of Combinatorial Theory, Series B}, 43(2):187--222,
  1987.

\bibitem{LP86}
L.~Lov\'asz and M.~D. Plummer.
\newblock {\em Matching Theory}.
\newblock Number 29 in Annals of Discrete Mathematics. Elsevier Science, 1986.

\bibitem{LK}
F.~Lu, X.~Feng \, and Y.~Wang.
\newblock $b$-invariant edges in essentially 4-edge-connected near-bipartite
  cubic bricks.
\newblock {\em The Electronic J. of Combin.}, 27(1):$\sharp$ P1.55, 2020.

\bibitem{P9130}
J.~Petersen.
\newblock Die theorie der regul{\"a}ren graphs.
\newblock {\em Acta Mathematica}, 15(1):193--220, 1891.

\bibitem{zlg}
S.~Zhai, C.~L.~Lucchesi \, and X.~Guo.
\newblock A lower bound on the number of removable ears of 1-extendable graphs.
\newblock {\em Discrete Math.}, 310(5):1123--1126, 2010.

\end{thebibliography}








\end{document}